\documentclass[a4paper,11pt,reqno]{amsart}
\usepackage{amsmath}
\usepackage{amssymb}
\usepackage{paralist}
\usepackage{graphics} 
\usepackage{epsfig} 
\usepackage{graphicx}  
\usepackage{epstopdf} 
\usepackage{subfig}
\usepackage[colorlinks=true]{hyperref}
\usepackage{tikz}
\usepackage{pgfplots}
\usepackage{xcolor}
\usepackage{algorithm}
\usepackage{algpseudocode}
\usepackage{comment}

\hypersetup{urlcolor=blue, citecolor=red}

\newtheorem{theorem}{Theorem}[section]

\newtheorem{lemma}[theorem]{Lemma}
\newtheorem{proposition}[theorem]{Proposition}
\newtheorem{conjecture}[theorem]{Conjecture}
\newtheorem{assumption}[theorem]{Assumption}

\theoremstyle{definition}
\newtheorem{definition}[theorem]{Definition}
\newtheorem{example}[theorem]{Example}
\newtheorem{remark}[theorem]{Remark}

\title[Numerics For Partially Segregated Elliptic Systems]{}
\author{}

\begin{document}

\begin{center}
{\LARGE\textbf{Numerical Algorithms for  Partially Segregated Elliptic Systems}}
\end{center}

\vspace{2em}

\begin{center}
\begin{minipage}[t]{\textwidth}
\centering
\begin{minipage}[t]{0.45\textwidth}
\centering
\textbf{Farid Bozorgnia}\\
\small Department of Mathematics\\
\small New Uzbekistan University\\
\small Tashkent, Uzbekistan\\
\small \texttt{drfaridba@gmail.com}
\end{minipage}%
\hfill%
\begin{minipage}[t]{0.45\textwidth}
\centering
\textbf{Avetik Arakelyan}\\
\small Institute of Mathematics, NAS of Armenia\\
\small Yerevan State University\\
\small Yerevan, Armenia\\
\small \texttt{avetik.arakelyan@ysu.am}
\end{minipage}

\vspace{1.5em}

\begin{minipage}[t]{0.45\textwidth}
\centering
\textbf{ Vyacheslav Kungurtsev}\\
\small Department of Computer Science\\
\small Czech Technical University in Prague, Faculty of Electrical Engineering\\
\small Prague, Czech Republic\\
\small \texttt{vyacheslav.kungurtsev@fel.cvut.cz}
\end{minipage}%
\hfill%
\begin{minipage}[t]{0.45\textwidth}
\centering
\textbf{Jan Valdman}\\
\small Department of Decision-Making Theory\\
\small UTIA CAS\\
\small Prague, Czech Republic\\
\small \texttt{jan.valdman@utia.cas.cz}
\end{minipage}
\end{minipage}
\end{center}

\begin{abstract}

We develop numerical methods for elliptic systems governed by partial segregation constraints, in which three nonnegative components are required to have a vanishing pointwise product throughout the domain. This constraint enforces that at least one component must be zero at every spatial location, leading to a highly nonconvex admissible set that prevents the use of standard convex optimization techniques.
We propose two complementary computational frameworks. The first is a strong-competition penalty method, solved via damped Gauss–Seidel/Picard iterations with a continuation strategy on the penalty parameter, for which we establish compactness results, Lipschitz estimates, and interior exponential improvement in the strong-competition regime. The second is a projected gradient method, together with an accelerated variant, that exploits an explicit pointwise projection onto the three-phase segregation set. Numerical experiments on a suite of benchmark boundary configurations confirm that both algorithms resolve segregated phase patterns.


\end{abstract}
\maketitle
\bigskip

\tableofcontents

\section{Introduction}
In recent decades, there has been considerable interest in the study of spatial segregation phenomena arising in reaction–diffusion systems. In particular, the existence of spatially inhomogeneous solutions for Lotka–Volterra–type competition models involving two or more interacting species has been extensively investigated (see \cite{VZ2014, MR2146353, WZ2010, WeiWeth2008, MR2151234, MR2417905}). These systems lead to pairwise segregated problems, where solutions satisfy $u_i \cdot u_j=0$  for
 $i \neq j$ (discussed in Section 2), lead to an important class of multi-phase free boundary problems, which capture the competition and coexistence dynamics among different components.  These problems have gained interest due to their significant applications in various branches of applied mathematics. To see the diversity of applications, we refer to \cite{Avet-Henrik,bucur-multi,Aram-Caff,farid-optimal-partition} and the references therein.

In this paper, we investigate numerical schemes for approximating the solutions
of a class of elliptic systems that describe competitive interactions among
multiple components $u_1, \ldots, u_m$, where each $u_i \colon \Omega \to
[0,\infty)$ represents the density (or concentration) of the $i$-th species
defined on a bounded domain $\Omega \subset \mathbb{R}^d$, $d = 2$ or $3$,
subject to the segregation constraint
\[
\prod_{i=1}^m u_i(x) = 0,
\]
which enforces that at least one component must vanish at each spatial location. This constraint leads to a natural spatial partition of the domain into regions dominated by distinct components. To the best of our knowledge, the numerical approximation of partially segregated elliptic systems has not been explored in the existing literature, which serves as a primary motivation for this work.

Such systems arise in models of multi-component Bose–Einstein condensates and other physical or biological systems involving strong competition between species. Also, this system and the limiting case appear in the theory of flames
and are related to a model called the Burke-Schumann approximation.  In comparison, several theoretical studies have explored the qualitative properties and asymptotic limits of these models (e.g., \cite{soave2024partial1,soave2024partial2, bozorgnia2018singularly, bozorgnia2025gamma}), but our focus here is entirely numerical. We develop computational schemes to approximate segregated equilibrium configurations efficiently.

\medskip
 
The optimization of functionals subject to nonconvex constraints represents one of the most challenging problems in computational mathematics, where classical convex optimization theory provides limited guidance and standard numerical algorithms often fail to converge or converge to poor local optima.   In \cite{ekeland1999convex}, a number of variational problems with nonconvex constraints were found to have multiple local minima. Moreover, in the case considered in this work, the nonconvex constraint is irregular, that is, Robinson's Constraint Qualification does not hold for any feasible point of the segregation constraint, invalidating the standard theory for establishing primal-dual first order necessary optimality conditions as a target for computational optimization~\cite{ito2003lagrange}.  A detailed mathematical study of optimality conditions is beyond the scope of this paper.

Traditional numerical methods for segregation problems often struggle with the inherent stiffness and non-smoothness of the solutions.  Our main contributions can be summarized as follows. 
  We implement a variety of optimization-based algorithms: penalty methods and projected gradient flows.

\medskip 
 
The remainder of this paper is structured as follows. Section~2 establishes the mathematical framework and reviews relevant theoretical results from the segregation literature. Section~3 presents our algorithmic developments, including detailed descriptions of each method and implementation considerations.  Finally, Section~4 discusses numerical experiments.

\section{Mathematical Background and Previous Works}

Consider minimizing the following  Dirichlet energy
\begin{equation}\label{eq:constrained_energy_intro}
E(U) = \int_{\Omega} \sum_{i=1}^{m} |\nabla u_{i}(x)|^{2} dx
\end{equation}
over the non-convex set
\begin{equation}\label{eq:constraint_set_intro}
S = \left\{ (u_1, \cdots, u_m): u_{i} \in H^{1}(\Omega),\, u_{i} \geq 0, \, \prod_{i=1}^{m} u_{i} (x)=0, \, u_{i}|_{\partial\Omega}=\phi_{i} \right\}.
\end{equation}

\begin{assumption}\label{ass:boundary_data}
The boundary data $\phi_{i}$ are non-negative $C^{1, \alpha}$ functions satisfying the segregation condition
\[
\prod_{i=1}^{m} \phi_{i}=0 \quad \text{on} \,\, \partial \Omega.
\]
\end{assumption}

A related class that bears certain similarities to the problem considered here is the adjacent (or pairwise) segregation model. The pairwise segregation model has been extensively studied in the literature.  Segregation phenomena in elliptic systems are classically modeled by 
strongly competing reaction–diffusion systems of the type
\begin{equation}\label{eq:seg-system}
\begin{cases}
 -\Delta u_i + \frac{1}{\varepsilon} \, u_i \sum_{j \neq i} u_j^2 = 0 
   & \text{in } \Omega, \\[0.3em]
 u_i \geq 0, \qquad u_i|_{\partial\Omega} = \phi_i, 
   & i=1,\dots,m,
\end{cases}
\end{equation}
with $\varepsilon >0$ a small competition parameter. 
As $\varepsilon \to 0$, the solutions $(u_1,\dots,u_m)$ segregate, in the sense that 
\[
u_i(x) \, u_j(x) = 0 \quad \text{a.e. in } \Omega, \qquad i \neq j,
\]
so that at most one component is active at each spatial point. 
Functions in the admissible class thus describe the limiting configurations
\[
\mathcal{S}_1 := \Big\{ U=(u_1,\dots,u_m) \in (H^1(\Omega))^m : u_i \geq 0,\;
u_i u_j = 0 \;\; (i \neq j) \Big\}.
\]
The limiting solution of system (\ref{eq:seg-system}) is the minimizer of the following  
\begin{equation}
E(U) = \int_\Omega \sum_{i=1}^m |\nabla u_i|^2 \, dx
\end{equation}
over the nonconvex set $\mathcal{S}_1$ with prescribed boundary
conditions. The present work treats the partial segregation problem both as a variational constraint model and the limit of a specific PDE model.

The rigorous asymptotic analysis of \eqref{eq:seg-system} as $\varepsilon \to 0$ 
has been extensively studied in the literature. 
Seminal works by Caffarelli and Lin~\cite{CaffarelliLin2007},  
and  ~\cite{SoaveTerracini2015,Crooks2004}
have established compactness, regularity of limiting interfaces, and the relation
to optimal partition problems, see also \cite{farid-optimal-partition}.

 \begin{definition}
Let $U = (u_1,\dots,u_m)$ with $u_i \ge 0$. We distinguish two admissible classes:
\begin{itemize}
\item  {Pairwise segregation}:  
\[
u_i\,u_j = 0 \qquad \text{for all } i \neq j ,
\]
denoted by the admissible set $\mathcal{S}_1$.

\item {Partial segregation}:  
\[
\prod_{i=1}^m u_i = 0,
\]
denoted by the admissible set $\mathcal{S}$.
\end{itemize}
\end{definition}
The main differences between these two notions are as follows:
\begin{enumerate}
\item  {Coexistence structure.}  
Partial segregation requires that at each point $x$ \emph{at least one} component vanishes.  
Thus $\mathcal{S}$ allows up to $m-1$ components to be simultaneously positive, whereas $\mathcal{S}_1$ restricts each point to having at most \emph{one} positive component.

\item  {Geometry of the admissible sets.}  
The set $\mathcal{S}_1$ is the finite union of mutually disjoint convex cones $ \bigcup_{i=1}^m \{\,u_j = 0 \ \text{for all } j \neq i\,\},$
so, at each point, exactly one species may survive.  In contrast,  $\mathcal{S}$ has a highly overlapping structure; points may belong to multiple cones simultaneously (e.g.\ when two or more components vanish).

\item For $m=3$, the vector $(0.5,\,0.3,\,0)$ belongs to $\mathcal{S}$ because at least one entry is zero, but it does \emph{not} belong to $\mathcal{S}_1$ since more than one component is positive.
\end{enumerate}

 From a computational perspective, the direct numerical solution of segregation problems presents several challenges.  The penalty parameter $\varepsilon$ must be chosen very small to approximate the segregation constraint, leading to severely ill-conditioned systems that challenge standard numerical methods.  The segregation constraints define a nonconvex feasible region, rendering standard convex optimization techniques inapplicable.  The non-convexity often results in multiple local solutions, posing challenges for solution selection and convergence to a global optimum.

These challenges have motivated the development of various computational approaches in the literature, including penalty methods that regularize the constraint violations, level-set and phase-field methods that represent interfaces implicitly~\cite{merriman1994,barrett2007}, and optimization-based approaches that systematically handle constraints through Lagrange multipliers and active set strategies~\cite{kanzow2021}.

Most existing numerical studies of diffusion systems with spatial segregation of Lotka–Volterra type focus primarily on two-component models, often restricted to one-dimensional domains. For instance, in \cite{CD2021}, the analysis of new conditional symmetries and exact solutions is presented for the two-component diffusive Lotka–Volterra system, while in \cite{CD2022}, a review and construction of exact solutions is likewise confined to the two-species case. Although such works provide insight into the underlying dynamics, they do not address the more challenging setting of multi-component systems in higher dimensions, where complex segregation patterns and interface structures arise; see also \cite{liu2018numerical, swailem2024computing, ChernihaKriukova2025, BainesChristou2021}.

The partial segregation set $S$ is geometrically much more complex than the pairwise set $\mathcal{S}_1$. In $\mathcal{S}_1$ each point belongs to one of $m$ \emph{disjoint} convex cones (exactly one active component).   In contrast, $\mathcal{S}$ contains many \emph{overlapping} cones, since any number of components may vanish simultaneously.  
This produces a far larger set of feasible switching patterns and a highly singular constraint, because the map 
$U \mapsto \prod_i u_i$ has a degenerate Jacobian on every set where some $u_i=0$.  
Penalty terms such as $(u_1u_2u_3)^2$ are more ill-conditioned than pairwise terms $u_i u_j$.  These make the partial segregation problem numerically more delicate and justify the need for specialized algorithms.

\section{Proposed Numerical Constrained Optimization Algorithms}

Optimization problems constrained over nonconvex sets frequently arise in applied
mathematics, physics, and machine learning, where one aims to balance competing
effects under nonlinear or geometric constraints \cite{km1, km2}. As we mentioned
in the previous section, these problems are challenging due to the possible
non-uniqueness of solutions, the absence of convexity, and the sensitivity of
numerical schemes to initial data and parameter choices. Developing stable and
efficient numerical algorithms that can effectively address these difficulties is
therefore of fundamental importance.
In this section, we develop and analyze two numerical algorithms for solving the
constrained optimization problem defined over the nonconvex set $S$.  Each algorithm
seeks to approximate a stationary point of the constrained functional by iteratively
minimizing an energy or Lagrangian under constraint consistency.
\medskip
In this work, we present two approaches. The first scheme is the Penalization
Method, which introduces a penalty term to regularize constraint violations,
transforming the constrained problem into a sequence of unconstrained problems.
This approach is conceptually the simplest and provides insight into the structure
of the feasible region. Next, we implement the Projected Gradient Method, which
performs a gradient descent step followed by orthogonal projection onto $S$. It is
straightforward to implement and computationally efficient for problems with simple
geometric constraints.

\subsection{Penalization Method}
  We    penalize the  constraint   $\big(u_1u_2u_3\big)=0$ as:
\begin{equation}\label{eq:penalty-functional}
E^\varepsilon(U)=\int_\Omega \sum_{i=1}^3 |\nabla u_i|^2\,dx
\;+\;\frac{1}{\varepsilon}\int_\Omega \big(u_1u_2u_3\big)^2\,dx,
\qquad \varepsilon>0,
\end{equation}
with only the   conditions  $u_i|_{\partial\Omega}=\phi_i$ imposed a priori. As $\varepsilon\to 0$, any limit point of minimizers of $E^\varepsilon$ satisfies the segregation constraint. Note that, at this stage, we do not claim that the limit minimizers also minimize \eqref{eq:constrained_energy_intro}-\eqref{eq:constraint_set_intro}. 

\begin{proposition}\label{prop:penalty_conv}
Let $(u_1^\varepsilon, u_2^\varepsilon, u_3^\varepsilon)$ be minimizer to the penalized energy
$$
E^\varepsilon(u) = \int_\Omega \sum_{i=1}^3 |\nabla u_i|^2 \, dx + \frac{1}{\varepsilon} \int_\Omega (u_1 u_2 u_3)^2 \, dx
$$
subject to $u_i^\varepsilon|_{\partial\Omega} = \phi_i$ and $u_i^\varepsilon \geq 0$. Then 
 $u_{i}^{\varepsilon} \rightharpoonup \overline{u}_{i}$ weakly in $H^{1}(\Omega)$ up to a subsequence for $i = 1,2,3.$ Moreover $(\overline{u}_{1}, \overline{u}_{2}, \overline{u}_{3}) \in S.$


\end{proposition}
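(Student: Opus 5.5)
The plan is the standard compactness argument: a uniform energy bound from a competitor in $S$, extraction of a weakly convergent subsequence, and passage to the limit in the penalty term. First I will build a competitor $V=(v_1,v_2,v_3)\in S$ with $v_i|_{\partial\Omega}=\phi_i$ that does not depend on $\varepsilon$. By Assumption~\ref{ass:boundary_data}, $\phi_1\phi_2\phi_3=0$ on $\partial\Omega$. The obvious candidate is harmonic (or any $H^1$) extensions $\tilde\phi_i\ge 0$, but their product need not vanish inside $\Omega$. So I will keep $v_1=\tilde\phi_1$ and $v_2=\tilde\phi_2$, and take $v_3$ to be a truncation of $\tilde\phi_3$ that vanishes wherever $v_1v_2>0$ away from the boundary.

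A cleaner construction uses cutoffs. Let $\eta_\delta$ be a Lipschitz cutoff equal to $1$ on $\partial\Omega$ and $0$ outside a $\delta$-neighbourhood of it. Then set $v_3=\eta_\delta\,\tilde\phi_3$ and multiply $v_1$ or $v_2$ by a similar factor. One then has to check that the product vanishes in the boundary layer, using the $C^{1,\alpha}$ regularity of the data. I expect this step to be the main technical nuisance, and I will handle it as follows. On $\partial\Omega$, at each point at least one $\phi_i$ is zero. Set $w_i=(\tilde\phi_i-\min_j\tilde\phi_j)^+$. This gives $w_i\ge0$ and $\prod_i w_i=0$, since the index attaining the minimum gives zero. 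Moreover $w_i\in H^1$, and on the boundary $\min_j\phi_j=0$, so $w_i=\phi_i$ there. Hence $V=(w_1,w_2,w_3)\in S$ with the right traces. This is the construction I will commit to.

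Minimality then gives
\[
E^\varepsilon(U^\varepsilon)\le E^\varepsilon(V)=\int_\Omega\sum_i|\nabla w_i|^2\,dx=:C_0,
\]
because the penalty term vanishes on $V$. Consequently $\|\nabla u_i^\varepsilon\|_{L^2}^2\le C_0$ and $\int_\Omega(u_1^\varepsilon u_2^\varepsilon u_3^\varepsilon)^2\,dx\le C_0\,\varepsilon$. The fixed boundary data and Poincaré's inequality, applied to $u_i^\varepsilon-\tilde\phi_i\in H^1_0$, give a uniform $H^1$ bound. Weak compactness in $H^1$ then yields a subsequence with $u_i^\varepsilon\rightharpoonup\bar u_i$. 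By Rellich, the subsequence converges strongly in $L^2$, and after a further extraction it also converges a.e. in $\Omega$.

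It remains to show $\bar U\in S$. The trace operator is weakly continuous and $H^1_0$ is weakly closed, so $\bar u_i|_{\partial\Omega}=\phi_i$. Nonnegativity passes to the limit through the a.e. convergence. For the product, a.e. convergence gives $(u_1^\varepsilon u_2^\varepsilon u_3^\varepsilon)^2\to(\bar u_1\bar u_2\bar u_3)^2$ a.e. Fatou's lemma then gives
\[
\int_\Omega(\bar u_1\bar u_2\bar u_3)^2\,dx\le\liminf_{\varepsilon\to0}\int_\Omega (u_1^\varepsilon u_2^\varepsilon u_3^\varepsilon)^2\,dx\le\liminf_{\varepsilon\to0} C_0\varepsilon=0.
\]
Hence $\bar u_1\bar u_2\bar u_3=0$ a.e., and $(\bar u_1,\bar u_2,\bar u_3)\in S$.

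The only delicate point is the competitor, since the rest is routine. With the min-subtraction construction above, that step reduces to noting that $H^1$ is a lattice, so positive parts and minima of $H^1$ functions stay in $H^1$, and that traces commute with these operations.
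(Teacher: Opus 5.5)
Your proof is correct. It differs from the paper's in the choice of competitor and in how the limit of the penalty term is justified.

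\textbf{The competitor.} The paper compares $u^\varepsilon$ with a minimizer $u^*$ of $E$ over $S$, whose existence it attributes to the direct method, and obtains $E^\varepsilon(u^\varepsilon)\le E(u^*)=\min_S E$. You instead build an explicit admissible triple $w_i=\tilde\phi_i-\min_j\tilde\phi_j$; your outer positive part is redundant, since this difference is already nonnegative. This uses only three facts: $H^1$ is a lattice, traces commute with $\min$, and $\phi_i\ge 0$ together with $\phi_1\phi_2\phi_3=0$ force $\min_j\phi_j=0$ on $\partial\Omega$.

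Your route is more self-contained. The paper's appeal to the direct method tacitly needs two things. First, $S$ with the prescribed traces must be nonempty, which is exactly what your construction proves. Second, $S$ must be weakly closed in $H^1$, which requires the same a.e.-convergence and Fatou argument you give at the end.

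\textbf{The limit of the penalty term.} The paper passes from $\|\prod_i u_i^{\varepsilon_k}\|_{L^2}\to 0$ and strong $L^2$ convergence of each factor directly to $\prod_i\bar u_i=0$, without saying why the products converge. You close that gap by extracting an a.e.\ convergent subsequence and applying Fatou.

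\textbf{What the paper's choice buys.} Its competitor gives the sharper bound by $\min_S E$. With one more line using weak lower semicontinuity of the Dirichlet integral, this yields $E(\bar u)\le\liminf_k\int_\Omega\sum_i|\nabla u_i^{\varepsilon_k}|^2\,dx\le\min_S E$. So the weak limit is itself a constrained minimizer, without appealing to the external results cited in the subsequent remark. Your bound by $E(V)$ does not give this, but the proposition as stated does not require it.
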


\begin{proof}
 
 Let $u^* = (u_1^*, u_2^*, u_3^*) \in S$ be a minimizer of the constrained problem
$$\min_{u \in S} E(u) = \min_{u \in S} \int_\Omega \sum_{i=1}^3 |\nabla u_i|^2 \, dx.$$
Such a minimizer $u^*$ exists by standard direct methods in the calculus of variations. Since $u^*$ satisfies the segregation constraint $\prod_i u_i^* = 0$ a.e., the 
penalty term vanishes when we use $u^*$ as a competitor for the penalized problem. We get
\begin{align*}
E^\varepsilon(u^\varepsilon) &\leq E^\varepsilon(u^*) \\
&= \int_\Omega \sum_{i=1}^3 |\nabla u_i^*|^2 \, dx + \frac{1}{\varepsilon} \int_\Omega \underbrace{(u_1^* u_2^* u_3^*)^2}_{=0 \text{ a.e.}} \, dx \\
&= E(u^*) < +\infty.
\end{align*}
Therefore, $\sup_{\varepsilon > 0} E^\varepsilon(u^\varepsilon) \leq E(u^*) < +\infty$.
We use $E(u^*)$ as our bound because 
$u^*$ satisfies the segregation constraint and thus is a valid competitor for all $\varepsilon > 0$.
From the energy we have
$$\frac{1}{\varepsilon} \int_\Omega (u_1^\varepsilon u_2^\varepsilon u_3^\varepsilon)^2 \, dx = E^\varepsilon(u^\varepsilon) - \int_\Omega \sum_i |\nabla u_i^\varepsilon|^2 \, dx \leq E(u^{*}).$$
Therefore,
\begin{equation}\label{prod_eps}
\left\| \prod_i u_i^\varepsilon \right\|_{L^2}^2 \leq \varepsilon \cdot E(u^{*}) \implies \left\| \prod_i u_i^\varepsilon \right\|_{L^2} \leq \sqrt{\varepsilon \cdot E(u^{*})} = C\sqrt{\varepsilon}.
\end{equation}
From the first part, $u^\varepsilon$ is bounded in $H^1(\Omega)^3$. By   the Rellich-Kondrachov compactness theorem, there exists a subsequence with
$$u_i^{\varepsilon_k} \to \overline{u}_i \quad \text{strongly in } L^2(\Omega), \quad u_i^{\varepsilon_k} \rightharpoonup \overline{u}_i \quad \text{weakly in } H^1(\Omega).$$
Due to \eqref{prod_eps} we conclude $\|\prod_i u_i^{\varepsilon_k}\|_{L^2} \to 0$, hence $\prod_i \overline{u}_i = 0,$ which  implies $(\overline{u}_1, \overline{u}_2, \overline{u}_3)\in S$.
\end{proof}
  
\begin{remark}
    It is noteworthy that  due to recent works \cite{soave2024partial1,soave2024partial2}, the vector $(\overline{u}_1, \overline{u}_2, \overline{u}_3)\in S$ is in fact a minimizer to \eqref{eq:constrained_energy_intro}-\eqref{eq:constraint_set_intro}.
\end{remark}

Formally differentiating $E^\varepsilon$ yields, for $i=1,2,3$,
\begin{equation}\label{eq:EL-strong}
-\Delta u_i^\varepsilon \;+\; \frac{1}{\varepsilon}\,u_i^\varepsilon\,\big(u_j^\varepsilon u_k^\varepsilon\big)^2 \;=\; 0
\quad\text{in }\Omega,\qquad
u_i^\varepsilon|_{\partial\Omega}=\phi_i,
\end{equation}
where $\{i,j,k\}=\{1,2,3\}$.
Equivalently, given $u_j^\varepsilon,u_k^\varepsilon$, the $i$-th equation is linear in $u_i^\varepsilon$:
\begin{equation}\label{eq:linearized}
-\Delta u_i^\varepsilon + c_i^\varepsilon(x)\,u_i^\varepsilon = 0,
\qquad c_i^\varepsilon(x):=\frac{1}{\varepsilon}\big(u_j^\varepsilon u_k^\varepsilon\big)^2 \ge 0.
\end{equation}

 In the \emph{penalized} problem (\ref{eq:EL-strong}), we do \emph{not} impose $u_i^\varepsilon\ge 0$ explicitly, and positivity follows from the maximum principle. 

 As $\varepsilon\to 0$, one recovers a segregated state. However, because of the challenging regularity properties of the original problem, theoretical guarantees of optimality as far as standard first order conditions cannot be constructed. Consideration of notions of stationarity specific to complementarity constraints on optimization over Banach spaces as presented in, e.g.~\cite{Wachsmuth2015}, have not been derived for this or other algorithms presented, to the best of the authors' knowledge, and would necessitate significant theoretical development.


Let $\Omega \subset \mathbb{R}^{n}$ be a bounded Lipschitz domain and fix $\varepsilon>0$.
Assume boundary data $\phi_i \in H^{1/2}(\partial\Omega)\cap L^{\infty}(\partial\Omega)$ with
$\phi_i\ge 0$ for $i=1,2,3$. In the rest of the paper, we drop the dependence $\varepsilon$  in $u_i^\varepsilon$, and we write $u_i$.   Consider
\begin{equation}
\label{eq:system}
\begin{cases}
\Delta u_i = \dfrac{u_i}{\varepsilon}\displaystyle\prod_{j\ne i}u_j^2 & \text{in }\Omega,\\[1ex]
u_i=\phi_i & \text{on }\partial\Omega,
\end{cases}
\qquad i=1,2,3.
\end{equation}
Set
\begin{equation}
M:=\max_{i=1,2,3}\|\phi_i\|_{L^\infty(\partial\Omega)}.
\end{equation}
Let $\lambda_D>0$ be the first Dirichlet eigenvalue of $-\Delta$ on $\Omega$:
\begin{equation}
\label{eq:lambdaD}
\lambda_D := \inf_{w\in H_0^1(\Omega)\setminus\{0\}}
\frac{\int_{\Omega}|\nabla w|^2\,dx}{\int_{\Omega}|w|^2\,dx}.
\end{equation}
In particular,
\begin{equation}
\label{eq:poincare}
\int_{\Omega}|\nabla w|^2\,dx \ge \lambda_D \int_{\Omega}|w|^2\,dx
\qquad\text{for all } w\in H_0^1(\Omega).
\end{equation}
We repeatedly use the inequalities, valid whenever the indicated quantities lie in $[-M,M]$:
\begin{equation}
\label{eq:ineq3}
|a^2-b^2|\le 2M|a-b|, \qquad |(ab)^2-(cd)^2|\le 2M^3\bigl(|a-c|+|b-d|\bigr).
\end{equation}

We begin with the linear Dirichlet theory that will be used throughout: well-posedness and
an $H^1$ bound, a maximum principle, a comparison principle, and an $L^2$ resolvent estimate.

\begin{lemma}\label{lem:1.1}
Let $q\in L^\infty(\Omega)$ with $q\ge 0$ and let $\phi\in H^{1/2}(\partial\Omega)$.
There exists a unique $z\in H^1(\Omega)$ with trace $z|_{\partial\Omega}=\phi$ such that
\begin{equation}
\label{eq:4}
-\Delta z + \frac{q}{\varepsilon}z = 0 \quad \text{in }\Omega.
\end{equation}
Moreover,
\begin{equation}
\label{eq:5}
\|z\|_{H^1(\Omega)} \le C(\Omega)\Bigl(1+\frac{\|q\|_{L^\infty(\Omega)}}{\varepsilon}\Bigr)\|\phi\|_{H^{1/2}(\partial\Omega)}.
\end{equation}
\end{lemma}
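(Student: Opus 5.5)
The plan is the standard lifting-plus-Lax--Milgram argument. Since $\phi\in H^{1/2}(\partial\Omega)$ and $\Omega$ is Lipschitz, the trace operator $H^1(\Omega)\to H^{1/2}(\partial\Omega)$ has a bounded right inverse. So we fix an extension $\Phi\in H^1(\Omega)$ with $\Phi|_{\partial\Omega}=\phi$ and
\[
\|\Phi\|_{H^1(\Omega)}\le C_{\mathrm{tr}}(\Omega)\|\phi\|_{H^{1/2}(\partial\Omega)}.
\]
We then look for $z=\Phi+w$ with $w\in H_0^1(\Omega)$. The weak form of \eqref{eq:4} becomes: find $w\in H^1_0(\Omega)$ with
\[
a(w,v):=\int_\Omega \nabla w\cdot\nabla v\,dx+\frac1\varepsilon\int_\Omega q\,w\,v\,dx
= -\int_\Omega \nabla\Phi\cdot\nabla v\,dx-\frac1\varepsilon\int_\Omega q\,\Phi\, v\,dx=:\ell(v)
\]
for all $v\in H^1_0(\Omega)$.

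\textbf{Existence and uniqueness.} First, $a$ is bounded on $H_0^1$, with constant $1+\|q\|_{L^\infty}/\varepsilon$. Second, since $q\ge0$, we have $a(w,w)\ge\int_\Omega|\nabla w|^2$. By the Poincaré inequality \eqref{eq:poincare}, this gives coercivity
\[
a(w,w)\ge \alpha\|w\|_{H^1}^2,\qquad \alpha=\frac{\lambda_D}{1+\lambda_D},
\]
and $\alpha$ is independent of $q$ and $\varepsilon$. The functional $\ell$ is bounded, with $\|\ell\|_{H^{-1}}\le (1+\|q\|_{L^\infty}/\varepsilon)\|\Phi\|_{H^1}$. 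Lax--Milgram therefore yields a unique $w$, and hence existence of $z$.

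Uniqueness of $z$ does not depend on the choice of extension. If $z_1,z_2$ are two solutions with the same trace, then $z_1-z_2\in H^1_0$ solves the homogeneous problem. Testing with $z_1-z_2$ and using coercivity gives $z_1=z_2$.

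\textbf{The estimate.} Lax--Milgram gives $\|w\|_{H^1}\le \alpha^{-1}\|\ell\|\le \alpha^{-1}(1+\|q\|_\infty/\varepsilon)\|\Phi\|_{H^1}$. Then
\[
\|z\|_{H^1}\le \|\Phi\|_{H^1}+\|w\|_{H^1}\le (1+\alpha^{-1})\Bigl(1+\tfrac{\|q\|_\infty}{\varepsilon}\Bigr)C_{\mathrm{tr}}\|\phi\|_{H^{1/2}},
\]
which is \eqref{eq:5} with $C(\Omega)=(1+\alpha^{-1})C_{\mathrm{tr}}$.

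The only delicate point is making the constant depend only on $\Omega$ and not on $q$ or $\varepsilon$. This works because coercivity uses only the gradient term, since the zeroth-order term is nonnegative. The only place $q/\varepsilon$ enters is the bound on $\ell$, and it enters linearly, which gives exactly the factor $(1+\|q\|_\infty/\varepsilon)$.

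An alternative route would be to note that $z$ minimizes $\int|\nabla z|^2+\varepsilon^{-1}\int q z^2$ among functions with trace $\phi$. Comparing with $\Phi$ bounds the energy of $z$ by $(1+\|q\|_\infty/\varepsilon)\|\Phi\|_{H^1}^2$, which gives an estimate with the factor $(1+\|q\|_\infty/\varepsilon)^{1/2}$. That is even better and certainly implies \eqref{eq:5}. Either route suffices, and I would present the Lax--Milgram version.
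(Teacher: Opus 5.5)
Your proposal is correct and follows essentially the same route as the paper: lift $\phi$ to $\Phi\in H^1(\Omega)$, then apply Lax--Milgram to $w=z-\Phi\in H_0^1(\Omega)$, using only $q\ge 0$ and Poincar\'e for coercivity. The one minor difference is how the bound is obtained: the paper tests with $w$ and uses Young's inequality, which is essentially your ``alternative'' energy route and gives the sharper factor $(1+\|q\|_{L^\infty}/\varepsilon)^{1/2}$, whereas you quote the abstract Lax--Milgram stability bound $\|w\|_{H^1}\le\alpha^{-1}\|\ell\|$; both yield \eqref{eq:5}.
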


\begin{proof}
Choose an extension $\Phi\in H^1(\Omega)$ of $\phi$ with $\|\Phi\|_{H^1(\Omega)}\le C(\Omega)\|\phi\|_{H^{1/2}(\partial\Omega)}$.
Write $z=\Phi+w$ with $w\in H_0^1(\Omega)$. Then $z$ satisfies \eqref{eq:4} if and only if $w$ satisfies, for all $\psi\in H_0^1(\Omega)$,
\[
\int_{\Omega}\nabla w\cdot\nabla\psi\,dx + \int_{\Omega}\frac{q}{\varepsilon}w\psi\,dx
=
-\int_{\Omega}\nabla\Phi\cdot\nabla\psi\,dx - \int_{\Omega}\frac{q}{\varepsilon}\Phi\psi\,dx.
\]
The left-hand side is a coercive, continuous bilinear form on $H_0^1(\Omega)$ (since $q\ge 0$), hence by Lax--Milgram there exists a unique $w\in H_0^1(\Omega)$. Uniqueness of $z$ follows.

Testing the identity with $\psi=w$ and using Cauchy--Schwarz and Young's inequality yields
\[
\|\nabla w\|_{L^2}^2 + \int_{\Omega}\frac{q}{\varepsilon}w^2
\le
\frac12\|\nabla w\|_{L^2}^2 + \frac12\|\nabla\Phi\|_{L^2}^2
+\frac12\int_{\Omega}\frac{q}{\varepsilon}w^2 + \frac12\int_{\Omega}\frac{q}{\varepsilon}\Phi^2.
\]
Hence
\[
\|\nabla w\|_{L^2}^2 + \int_{\Omega}\frac{q}{\varepsilon}w^2
\le
\|\nabla\Phi\|_{L^2}^2 + \frac{\|q\|_{L^\infty}}{\varepsilon}\|\Phi\|_{L^2}^2
\le
C(\Omega)\Bigl(1+\frac{\|q\|_{L^\infty}}{\varepsilon}\Bigr)\|\Phi\|_{H^1}^2.
\]
By Poincar\'e, $\|w\|_{L^2}\le C(\Omega)\|\nabla w\|_{L^2}$, so $\|w\|_{H^1}\le C(\Omega)\|\nabla w\|_{L^2}$.
Combining with the bound on $\|\Phi\|_{H^1}$ yields \eqref{eq:5}.
\end{proof}

The next lemma states the weak maximum principle in the present setting, yielding uniform bounds from the boundary data.

\begin{lemma}\label{lem:1.2}
Let $q\in L^\infty(\Omega)$ with $q\ge 0$ and let $z\in H^1(\Omega)$ be a weak solution of
\[
-\Delta z + \frac{q}{\varepsilon}z = 0 \quad \text{in }\Omega
\]
with trace $z|_{\partial\Omega}=\phi$, where $\phi\in H^{1/2}(\partial\Omega)\cap L^\infty(\partial\Omega)$ and $\phi\ge 0$.
Then
\[
0\le z \le \|\phi\|_{L^\infty(\partial\Omega)} \quad\text{a.e. in }\Omega.
\]
\end{lemma}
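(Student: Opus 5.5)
The approach is Stampacchia truncation: test the weak formulation with the negative part of $z$ for the lower bound, and with the part of $z$ above $\|\phi\|_{L^\infty(\partial\Omega)}$ for the upper bound. Set $K:=\|\phi\|_{L^\infty(\partial\Omega)}$. Then check that each test function lies in $H_0^1(\Omega)$. In both cases the sign condition $q\ge 0$ makes the zeroth-order term nonnegative, so the gradient of the truncation must vanish.

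\emph{Lower bound.} Take $\psi:=z^-=\max(-z,0)$. Since $z\in H^1(\Omega)$, we have $z^-\in H^1(\Omega)$ with $\nabla z^-=-\nabla z\,\chi_{\{z<0\}}$. Its trace is $(z|_{\partial\Omega})^-=\phi^-=0$, because $\phi\ge 0$, so $z^-\in H_0^1(\Omega)$. Testing the weak equation
\[
\int_\Omega \nabla z\cdot\nabla\psi\,dx+\int_\Omega \frac{q}{\varepsilon}z\psi\,dx=0
\]
with $\psi=z^-$, and using $z\,z^-=-(z^-)^2$ and $\nabla z\cdot\nabla z^-=-|\nabla z^-|^2$, gives
\[
-\int_\Omega|\nabla z^-|^2\,dx-\int_\Omega\frac{q}{\varepsilon}(z^-)^2\,dx=0 .
\]
Both integrals are nonnegative because $q\ge0$, so $\nabla z^-=0$. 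Then the Poincar\'e inequality \eqref{eq:poincare} gives $z^-=0$, that is, $z\ge0$ a.e.

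\emph{Upper bound.} Take $\psi:=(z-K)^+$. Since $\phi\le K$ on $\partial\Omega$, its trace is $(\phi-K)^+=0$, so $\psi\in H_0^1(\Omega)$. Also $\nabla\psi=\nabla z\,\chi_{\{z>K\}}$. Testing gives
\[
\int_\Omega|\nabla (z-K)^+|^2\,dx+\int_\Omega\frac{q}{\varepsilon}\,z\,(z-K)^+\,dx=0 .
\]
On the set $\{z>K\}$ we have $z>K\ge0$, so the second integrand is $\ge0$. This is the one point where the sign of the constant $K$ matters: the constant $K$ is not itself a solution unless $q=0$, but it is a supersolution since $-\Delta K+\frac{q}{\varepsilon}K=\frac{q}{\varepsilon}K\ge0$. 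Hence $\nabla(z-K)^+=0$, and Poincar\'e again yields $(z-K)^+=0$, that is, $z\le K$ a.e.

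The main technical point is justifying that the truncations belong to $H_0^1(\Omega)$. This uses two standard facts: the chain rule for Lipschitz functions of Sobolev functions (Stampacchia), and the fact that the trace commutes with the Lipschitz maps $t\mapsto t^-$ and $t\mapsto (t-K)^+$ on a Lipschitz domain. The latter holds by density of $C(\overline\Omega)\cap H^1(\Omega)$ together with continuity of the trace and of these truncations in $H^1$. I will cite these facts rather than reprove them. The remaining steps are elementary sign checks. The admissibility of the test functions in the weak formulation is already built into the setting of Lemma~\ref{lem:1.1}, where the weak form was posed for all $\psi\in H_0^1(\Omega)$.
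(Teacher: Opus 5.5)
Your proof is correct and follows the same Stampacchia truncation argument as the paper: you test the weak formulation with $z^-$ and with $(z-\|\phi\|_{L^\infty(\partial\Omega)})^+$ after checking that both lie in $H_0^1(\Omega)$. Your upper-bound step is slightly more careful than the paper's, which writes the tested identity with $\int_\Omega \frac{q}{\varepsilon}((z-K)^+)^2\,dx$; the term that actually appears is $\int_\Omega \frac{q}{\varepsilon}\,z\,(z-K)^+\,dx$, which is only bounded below by that quantity, and your sign argument using $z>K\ge 0$ handles this correctly.
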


\begin{proof}
Let $m:=\|\phi\|_{L^\infty(\partial\Omega)}$.
To show non-negativity, let $z^- := (-z)^+ = \max\{-z,0\}\in H^1(\Omega)$.
Since $\phi\ge 0$, the trace of $z^-$ on $\partial\Omega$ vanishes, hence $z^-\in H_0^1(\Omega)$.
Testing the weak formulation by $z^-$ gives
\[
\int_{\Omega}|\nabla z^-|^2\,dx + \int_{\Omega}\frac{q}{\varepsilon}(z^-)^2\,dx = 0,
\]
so $z^-\equiv 0$ and therefore $z\ge 0$ a.e.

Let $w:=(z-m)^+\in H^1(\Omega)$.
Since the trace of $z-m$ equals $\phi-m\le 0$ on $\partial\Omega$, we have $w\in H_0^1(\Omega)$.
Testing by $w$ yields
\[
\int_{\Omega}|\nabla w|^2\,dx + \int_{\Omega}\frac{q}{\varepsilon}w^2\,dx = 0,
\]
hence $w\equiv 0$, i.e.\ $z\le m$ a.e.
\end{proof}

We next compare solutions when the coefficients are ordered.

\begin{lemma}\label{lem:1.3}
Let $c_1,c_2\in L^\infty(\Omega)$ satisfy $0\le c_1\le c_2$ a.e.\ in $\Omega$.
Let $u,v\in H^1(\Omega)$ be weak solutions of
\[
-\Delta u + \frac{c_1}{\varepsilon}u=0,\qquad -\Delta v + \frac{c_2}{\varepsilon}v=0 \quad \text{in }\Omega,
\]
with the same trace $u|_{\partial\Omega}=v|_{\partial\Omega}=\phi$, where $\phi\in H^{1/2}(\partial\Omega)\cap L^\infty(\partial\Omega)$ and $\phi\ge 0$.
Then $u\ge v$ a.e.\ in $\Omega$.
\end{lemma}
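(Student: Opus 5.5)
We argue with the same weak maximum principle technique used in Lemma~\ref{lem:1.2}, applied to the difference $w:=v-u$. First, Lemma~\ref{lem:1.2} gives $v\ge 0$ a.e. (and $0\le u,v\le\|\phi\|_{L^\infty(\partial\Omega)}$), since both equations have nonnegative bounded coefficients and the same nonnegative trace $\phi$. Subtracting the two weak formulations, for all $\psi\in H_0^1(\Omega)$ we get
\[
\int_\Omega \nabla w\cdot\nabla\psi\,dx+\int_\Omega \frac{c_1}{\varepsilon}\,w\,\psi\,dx
=\int_\Omega \frac{c_1-c_2}{\varepsilon}\,v\,\psi\,dx .
\]
We write the equation with the coefficient $c_1$ on $w$ so that the right-hand side carries the sign information: $c_1-c_2\le 0$ and $v\ge 0$.

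Next we test with $\psi:=w^+=(v-u)^+$. Since $u$ and $v$ have the same trace, $w$ has zero trace, so $w^+\in H_0^1(\Omega)$ is admissible. Using $\nabla w\cdot\nabla w^+=|\nabla w^+|^2$ and $w\,w^+=(w^+)^2$ a.e., we obtain
\[
\int_\Omega|\nabla w^+|^2\,dx+\int_\Omega\frac{c_1}{\varepsilon}(w^+)^2\,dx
=\int_\Omega\frac{c_1-c_2}{\varepsilon}\,v\,w^+\,dx\le 0,
\]
because the integrand on the right is a product of the nonpositive factor $c_1-c_2$ and the nonnegative factors $v$ and $w^+$. Both terms on the left are nonnegative, so $\nabla w^+=0$. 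Since $w^+\in H_0^1(\Omega)$, the Poincaré inequality \eqref{eq:poincare} gives $w^+\equiv 0$, that is, $u\ge v$ a.e. in $\Omega$.

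The only delicate point is obtaining the sign of the right-hand side. This requires knowing $v\ge0$, which is why Lemma~\ref{lem:1.2} is invoked first, and it requires placing $c_1$ rather than $c_2$ on the $w$ term. We also need all products to be integrable; this holds because $c_1,c_2\in L^\infty(\Omega)$ and $u,v\in L^2(\Omega)$, so no further regularity is needed.
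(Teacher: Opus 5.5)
Your proof is correct and follows the paper's argument: you test the difference equation with $(v-u)^+\in H_0^1(\Omega)$ and use the nonnegativity from Lemma~\ref{lem:1.2} to make the right-hand side nonpositive. The only difference is cosmetic. The paper puts $c_2$ on the difference, which gives the right-hand side $-\frac{c_2-c_1}{\varepsilon}u$ and uses $u\ge 0$. So, contrary to your closing remark, placing $c_1$ on $w$ is not actually required: either pairing works, since Lemma~\ref{lem:1.2} gives nonnegativity of both $u$ and $v$.
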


\begin{proof}
By Lemma~\ref{lem:1.2},  we have $u\ge 0$ a.e.\ in $\Omega$.
Let $w:=(v-u)^+\in H^1(\Omega)$.
Since $u$ and $v$ have the same trace, $w$ has zero trace on $\partial\Omega$, hence $w\in H_0^1(\Omega)$.
Subtracting the two equations gives, in the weak sense,
\[
-\Delta (v-u) + \frac{c_2}{\varepsilon}(v-u) = -\frac{c_2-c_1}{\varepsilon}u \le 0.
\]
Testing by $w$ and integrating by parts yields
\[
\int_{\Omega}|\nabla w|^2\,dx + \int_{\Omega}\frac{c_2}{\varepsilon}w^2\,dx
=
-\int_{\Omega}\frac{c_2-c_1}{\varepsilon}u w\,dx \le 0.
\]
Therefore $w\equiv 0$, i.e.\ $v\le u$ a.e.
\end{proof}

We finally record an $L^2$ resolvent bound for the Dirichlet problem, which will be the key
estimate for difference equations.

\begin{lemma}\label{lem:1.4}
Let $q\in L^\infty(\Omega)$ with $q\ge 0$, and let $w\in H_0^1(\Omega)$ solve
\[
-\Delta w + \frac{q}{\varepsilon}w = f \quad \text{in }\Omega
\]
weakly, with $f\in L^2(\Omega)$. Then
\[
\|w\|_{L^2(\Omega)} \le \frac{1}{\lambda_D}\|f\|_{L^2(\Omega)}.
\]
\end{lemma}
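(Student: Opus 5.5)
The plan is to test the weak formulation with the solution itself and combine the resulting energy identity with the Poincaré inequality \eqref{eq:poincare}. Since $w\in H_0^1(\Omega)$ is an admissible test function, the weak form gives
\[
\int_{\Omega}|\nabla w|^2\,dx + \int_{\Omega}\frac{q}{\varepsilon}w^2\,dx = \int_{\Omega} f w\,dx .
\]

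The first step uses the sign condition. Because $q\ge 0$ and $\varepsilon>0$, the zeroth-order term is nonnegative and can be discarded, leaving $\|\nabla w\|_{L^2}^2\le \int_\Omega f w\,dx$.

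The second step bounds the two sides against each other. The Poincaré inequality \eqref{eq:poincare} bounds the left side from below by $\lambda_D\|w\|_{L^2}^2$. Cauchy--Schwarz bounds the right side from above by $\|f\|_{L^2}\|w\|_{L^2}$. Together these give
\[
\lambda_D\|w\|_{L^2}^2\le \|f\|_{L^2}\|w\|_{L^2}.
\]

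The last step is to divide through. If $w\equiv 0$ the estimate is trivial; otherwise dividing by $\lambda_D\|w\|_{L^2}$ yields $\|w\|_{L^2}\le \lambda_D^{-1}\|f\|_{L^2}$.

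There is no real obstacle here. The only points needing care are that $w$ has zero trace, so it is a legitimate test function and Poincaré applies, and that the constant is independent of $q$ and $\varepsilon$. Both follow from dropping the nonnegative potential term rather than estimating it.
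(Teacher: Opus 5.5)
Your proposal is correct and matches the paper's proof: test the weak form with $w$, drop the nonnegative term $\int_\Omega \frac{q}{\varepsilon}w^2\,dx$, and combine the Poincar\'e inequality \eqref{eq:poincare} with Cauchy--Schwarz. You also handle the case $w\equiv 0$ explicitly before dividing, which the paper leaves implicit.
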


\begin{proof}
Testing  by $w$  the equation $-\Delta w + \frac{q}{\varepsilon}w = f$ we get
\[
\int_{\Omega}|\nabla w|^2\,dx + \int_{\Omega}\frac{q}{\varepsilon}w^2\,dx
=
\int_{\Omega}fw\,dx \le \|f\|_{L^2}\|w\|_{L^2}.
\]
Using  inequality \eqref{eq:poincare} and dropping the nonnegative term $\int_{\Omega}\frac{q}{\varepsilon}w^2$ leads to the desired result.
\end{proof}

The following three auxiliary lemmas are placed below because they will be
used to refine interior estimates in the strong-competition regime.

\begin{lemma}\label{lem:A}
Let $q\in L^\infty(\Omega)$ satisfy $q\ge 0$ a.e.\ in $\Omega$.
Let $z\in H^1(\Omega)$ be a weak solution of
\[
-\Delta z + \frac{q}{\varepsilon}z = 0 \quad \text{in }\Omega
\]
with boundary trace $z|_{\partial\Omega}=\phi\in H^{1/2}(\partial\Omega)\cap L^\infty(\partial\Omega)$.
Fix open sets $K_0\Subset U\Subset \Omega$ and assume that
\[
q(x)\ge q_0>0 \quad \text{for a.e.\ }x\in U.
\]
Then there exist constants $C=C(\Omega,U,K_0)>0$ and $c=c(\Omega,U,K_0)>0$ such that
\[
\|z\|_{L^\infty(K_0)} \le C\|\phi\|_{L^\infty(\partial\Omega)}\exp\!\Bigl(-c\sqrt{\frac{q_0}{\varepsilon}}\Bigr).
\]
\end{lemma}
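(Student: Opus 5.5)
We prove the estimate with an explicit exponential supersolution on $U$, combined with the comparison machinery of Lemmas~\ref{lem:1.2}--\ref{lem:1.3}. Set $m:=\|\phi\|_{L^\infty(\partial\Omega)}$ and $\mu:=\sqrt{q_0/\varepsilon}$. We may assume $\phi\ge 0$, since in the applications $\phi=\phi_i\ge 0$. For signed data we would instead split $\phi=\phi^+-\phi^-$ and handle the two solutions with data $\phi^\pm$ separately by linearity; this needs $\phi^\pm\in H^{1/2}$, which holds for Lipschitz traces. By Lemma~\ref{lem:1.2} we then have $0\le z\le m$ in $\Omega$, and in particular $z\le m$ on $\partial U$.

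\emph{Barrier.} Let $\delta:=\operatorname{dist}(K_0,\partial U)>0$ and cover $\overline{K_0}$ by finitely many balls $B(x_0,\delta)\subset U$ centred at points $x_0\in K_0$. On each such ball take
\[
\psi(x):=m\sum_{k=1}^n \frac{\cosh(\mu (x-x_0)_k/\sqrt n)}{\cosh(\mu\delta/\sqrt n)} .
\]
We have $\Delta\psi=\frac{\mu^2}{n}\psi\le \mu^2\psi$, so $-\Delta\psi+\frac{q}{\varepsilon}\psi\ge(\frac{q_0}{\varepsilon}-\frac{\mu^2}{n})\psi\ge 0$ on the ball. For $|x-x_0|=\delta$ some coordinate satisfies $|(x-x_0)_k|\ge\delta/\sqrt n$, hence $\psi\ge m\ge z$ on $\partial B(x_0,\delta)$. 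A radial barrier $m\,\frac{\sinh(\mu r)/r}{\sinh(\mu\delta)/\delta}$ in $d=3$, or one built from Bessel functions in general, would serve equally well; the cosh-sum keeps the computation trivial.

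\emph{Comparison.} Apply the comparison argument of Lemma~\ref{lem:1.3}, in its supersolution form, on $B=B(x_0,\delta)$. Put $w:=(z-\psi)^+$; then $w\in H^1_0(B)$ because $z\le\psi$ on $\partial B$. Test the inequality $-\Delta(z-\psi)+\frac{q}{\varepsilon}(z-\psi)\le 0$ with $w$ to get $\int|\nabla w|^2+\int\frac q\varepsilon w^2\le 0$, so $w=0$. At the centre this gives $z(x_0)\le \psi(x_0)=n\,m/\cosh(\mu\delta/\sqrt n)\le 2n\,m\,e^{-\mu\delta/\sqrt n}$. The same bound, with $\delta$ replaced by $\delta/2$, holds on $B(x_0,\delta/2)$ because $\psi$ is monotone in each $|(x-x_0)_k|$. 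Taking the finite cover yields the claim with $C=2n$ and $c=\delta/(2\sqrt n)$. This $c$ depends only on $K_0$ and $U$, as required.

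\emph{Main obstacle.} The delicate step is justifying the weak comparison with only $L^\infty$ coefficients and $H^1$ solutions, together with the pointwise meaning of "$z\le m$ on $\partial B$". We handle the boundary condition in the trace and $H^1_0$ sense exactly as in Lemma~\ref{lem:1.2}: $(z-\psi)^+\in H^1_0(B)$ holds because $z\le m\le\psi$ a.e.\ near $\partial B$. To get this cleanly, one can apply the comparison on the annulus-free ball with the bound $z\le m$ a.e.\ in all of $\Omega$, rather than just on $\partial B$. Everything else is elementary.
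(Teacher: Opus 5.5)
There is a genuine gap in the key step, though it is local and easy to repair. Your overall route is the paper's: an explicit supersolution on a ball $B(x_0,\delta)\subset U$ with $\delta=\operatorname{dist}(K_0,\partial U)$, weak comparison, and the a priori bound $|z|\le m$. The only change is that you replace the paper's radial solution of $-\Delta h+\mu^2 h=0$ by a sum of one-dimensional $\cosh$'s.

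\textbf{The barrier fails on $\partial B(x_0,\delta)$.} As written, $\psi$ does not dominate $m$ on the boundary of the ball, and the comparison needs exactly that. Two factors $1/\sqrt n$ are in play: one inside the $\cosh$, and the geometric one in $|(x-x_0)_k|\ge\delta/\sqrt n$. The normalization $\cosh(\mu\delta/\sqrt n)$ accounts for only one of them. For $n\ge 2$, such a coordinate therefore only gives the term $m\cosh(\mu\delta/n)/\cosh(\mu\delta/\sqrt n)<m$. At the boundary point $x=x_0+\frac{\delta}{\sqrt n}(1,\dots,1)$,
\[
\psi(x)=n\,m\,\frac{\cosh(\mu\delta/n)}{\cosh(\mu\delta/\sqrt n)}\le 2n\,m\,e^{-\mu\delta(1/\sqrt n-1/n)}\longrightarrow 0\qquad(\mu\to\infty).
\]
So $\psi<m$ on part of $\partial B$ precisely in the regime $\varepsilon\downarrow 0$ of interest, and $(z-\psi)^+\in H^1_0(B)$ is not justified.

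\textbf{The repair.} The slack inside the $\cosh$ is unnecessary, since $q/\varepsilon\ge\mu^2$ already. Take
\[
\psi(x)=m\sum_{k=1}^n\frac{\cosh\bigl(\mu(x-x_0)_k\bigr)}{\cosh(\mu\delta/\sqrt n)} .
\]
Then $-\Delta\psi+\frac{q}{\varepsilon}\psi=\bigl(\frac{q}{\varepsilon}-\mu^2\bigr)\psi\ge 0$ on $B(x_0,\delta)$. The coordinate with $|(x-x_0)_k|\ge\delta/\sqrt n$ alone gives $\psi\ge m$ on $\partial B(x_0,\delta)$. At the centre, $\psi(x_0)=nm/\cosh(\mu\delta/\sqrt n)\le 2nm\,e^{-\mu\delta/\sqrt n}$.

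\textbf{The half-ball step.} After this repair, your half-ball step breaks for $n\ge 4$. On $B(x_0,\delta/2)$, coordinatewise monotonicity only gives $\psi\le nm\cosh(\mu\delta/2)/\cosh(\mu\delta/\sqrt n)$, which decays in $\mu$ only when $n\le 3$. The same happens if you keep your argument and normalize by $\cosh(\mu\delta/n)$ instead. The finite cover is unnecessary anyway: every $x_0\in K_0$ satisfies $B(x_0,\delta)\subset U$, so, as in the paper, centre a barrier $\psi_{x_0}$ at every point. Comparison gives $z\le\psi_{x_0}$ a.e.\ on $B(x_0,\delta)$, and $\psi_{x_0}$ is continuous. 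Hence $z(x_0)\le\psi_{x_0}(x_0)$ at every Lebesgue point $x_0\in K_0$ of $z$. This gives the essential-supremum bound with $C=2n$ and $c=\delta/\sqrt n$.

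\textbf{Signed data.} You need not restrict to $\phi\ge 0$. Testing with $(\pm z-m)^+\in H^1_0(\Omega)$ gives $|z|\le m$ directly, and the barrier argument then applies to both $z$ and $-z$. Also, $\phi^\pm\in H^{1/2}(\partial\Omega)$ holds in general, not only for Lipschitz traces, since $t\mapsto t^+$ is $1$-Lipschitz.

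\textbf{Comparison with the paper.} Once fixed, your barrier is a more elementary alternative to the paper's and needs no Bessel asymptotics. For the paper's radial barrier one actually has $h(x_0)\sim c_n(\mu\delta)^{(n-1)/2}e^{-\mu\delta}$, so a polynomial prefactor must be absorbed by shrinking $c$. The price of your version is the decay rate $\delta/\sqrt n$ instead of $\delta$.
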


\begin{proof}
By Lemma~\ref{lem:1.2} applied to $\pm z$, we have $\|z\|_{L^\infty(\Omega)}\le \|\phi\|_{L^\infty(\partial\Omega)}$.
Set $d:=\mathrm{dist}(K_0,\partial U)>0$ and fix $x_0\in K_0$ so that $B_d(x_0)\subset U$.
Let $m:=\sqrt{q_0/\varepsilon}$ and let $h=h_{m,d}$ be the (radial) solution of
\[
-\Delta h + m^2 h = 0 \quad \text{in }B_d(x_0), \qquad h=1 \quad \text{on }\partial B_d(x_0).
\]
Then $0<h\le 1$ in $B_d(x_0)$ and standard ODE/Bessel estimates imply
\[
h(x_0)\le C_1 e^{-md} \quad \text{for some }C_1=C_1(n)>0.
\]
Since $q\ge q_0$ a.e.\ in $B_d(x_0)$, we have
\[
-\Delta h + \frac{q}{\varepsilon}h \ge -\Delta h + \frac{q_0}{\varepsilon}h = -\Delta h + m^2 h = 0 \quad \text{in }B_d(x_0).
\]
Let $M_{\partial}:=\|z\|_{L^\infty(\partial B_d(x_0))}$ and set $w:= z - M_{\partial}h$.
Then $w\le 0$ on $\partial B_d(x_0)$ and
\[
-\Delta w + \frac{q}{\varepsilon}w
=
-\Bigl(-\Delta(M_{\partial}h) + \frac{q}{\varepsilon}(M_{\partial}h)\Bigr)
\le 0
\quad \text{in }B_d(x_0).
\]
By the weak maximum principle, $w\le 0$ in $B_d(x_0)$, hence
\[
|z(x_0)|\le M_{\partial} h(x_0) \le M_{\partial} C_1 e^{-md}
\le \|\phi\|_{L^\infty(\partial\Omega)}\, C_1 e^{-md}.
\]
Taking the supremum over $x_0\in K_0$ yields the claimed estimate after absorbing $d$ and $C_1$ into constants $C,c$ depending on $(\Omega,U,K_0)$.
\end{proof}

\begin{lemma}\label{lem:B}
Assume that $(u^\varepsilon)_{\varepsilon>0}$ is a family of variationally selected solutions of \eqref{eq:system}, for
instance global minimizers (or local minimizers in the sense of \cite{soave2024partial1})
of the energy
\begin{equation}
\label{eq:energy6}
E_\varepsilon(u):=\frac12\sum_{i=1}^3\int_{\Omega}|\nabla u_i|^2\,dx + \frac{1}{2\varepsilon}\int_{\Omega}(u_1u_2u_3)^2\,dx
\end{equation}
among competitors with the same Dirichlet boundary traces $(\phi_1,\phi_2,\phi_3)$.
Then for every compact set $K_0\Subset \Omega$ there exists a subsequence $\varepsilon_k\downarrow 0$ such that
\begin{equation}
\label{eq:7}
\int_{K_0}\frac{(u^{\varepsilon_k}_1u^{\varepsilon_k}_2u^{\varepsilon_k}_3)^2}{\varepsilon_k}\,dx \longrightarrow 0
\qquad\text{as }k\to\infty.
\end{equation}
\end{lemma}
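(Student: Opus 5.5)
The plan is to prove the stronger global statement
\[
\frac{1}{\varepsilon_k}\int_{\Omega}(u^{\varepsilon_k}_1u^{\varepsilon_k}_2u^{\varepsilon_k}_3)^2\,dx\to 0,
\]
which implies \eqref{eq:7} for every $K_0\Subset\Omega$. The argument is an energy-convergence (Γ-convergence type) argument built on the compactness in Proposition~\ref{prop:penalty_conv}. The factor $\frac12$ in \eqref{eq:energy6} only rescales $E^\varepsilon$ from \eqref{eq:penalty-functional} by a harmless constant, so I work with $E_\varepsilon$ directly. I write $D(u):=\frac12\sum_i\int_\Omega|\nabla u_i|^2\,dx$ and $m_S:=\min_{u\in S}D(u)$. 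The minimum $m_S$ is attained by some $u^*\in S$ by the direct method, exactly as in the proof of Proposition~\ref{prop:penalty_conv}.

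\emph{Step 1 (upper bound).} For global minimizers, testing with $u^*$ kills the penalty term. This gives
\[
D(u^\varepsilon)+P_\varepsilon(u^\varepsilon)=E_\varepsilon(u^\varepsilon)\le E_\varepsilon(u^*)=m_S,
\qquad P_\varepsilon(u):=\frac{1}{2\varepsilon}\int_\Omega(u_1u_2u_3)^2\,dx .
\]
In particular $(u^\varepsilon)$ is bounded in $H^1(\Omega)^3$, using the fixed traces and Poincaré. By Proposition~\ref{prop:penalty_conv} there is a subsequence $\varepsilon_k\downarrow0$ with $u^{\varepsilon_k}\rightharpoonup\bar u$ weakly in $H^1$ and $\bar u\in S$. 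The boundary traces are preserved because the trace map is weakly continuous.

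\emph{Step 2 (lower bound and squeeze).} Weak lower semicontinuity of the Dirichlet integral and $\bar u\in S$ give
\[
m_S\le D(\bar u)\le\liminf_k D(u^{\varepsilon_k}).
\]
Therefore
\[
\limsup_k P_{\varepsilon_k}(u^{\varepsilon_k})\le m_S-\liminf_k D(u^{\varepsilon_k})\le 0 .
\]
Since $P_\varepsilon\ge0$, this yields $P_{\varepsilon_k}(u^{\varepsilon_k})\to0$ on all of $\Omega$, hence on $K_0$. As a byproduct, $D(u^{\varepsilon_k})\to D(\bar u)=m_S$, so $\bar u$ is a constrained minimizer and the convergence is in fact strong in $H^1$. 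This is consistent with the remark following Proposition~\ref{prop:penalty_conv}, but it is not needed for \eqref{eq:7}.

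\emph{Main obstacle: local minimizers.} For local minimizers the global competitor $u^*$ is not admissible, and this is where the real work lies. Here I would argue locally. For a ball $B\Subset\Omega$ inside the minimality neighbourhood, I would build a competitor that agrees with $u^\varepsilon$ outside $B$ and is segregated inside $B$. The construction is to replace, in $B$, the component with the smallest local energy by its truncation times a cutoff, or by a harmonic-type replacement that vanishes on a sub-ball. Local minimality then gives a uniform bound on $D$ plus the penalty restricted to $B$. It also gives, as $\varepsilon\to0$, a limsup inequality comparing $\int_B$ of the energy with the local constrained energy of the weak limit. The squeeze of Step 2 is then repeated on $B$, and $K_0$ is covered by finitely many such balls. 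This localization, and in particular ensuring that the competitors stay within the neighbourhood of local minimality as in \cite{soave2024partial1}, is the step I expect to be delicate. For global minimizers, Steps 1 and 2 already complete the proof.
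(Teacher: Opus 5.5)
For global minimizers your argument is correct and complete, and it takes a different route from the paper. The paper gives no argument of its own: it imports the statement from the $\beta=\varepsilon^{-1}\to\infty$ asymptotics of \cite{soave2024partial1}, i.e.\ from the uniform H\"older bounds and strong convergence recorded in Lemma~\ref{lem:C}, applied on $K_0$.

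Your energy squeeze is elementary and self-contained: the upper bound comes from the segregated competitor $u^*$, and the lower bound from weak lower semicontinuity together with $\bar u\in S$ via Proposition~\ref{prop:penalty_conv}. Nonemptiness of $S$, which both you and the paper take for granted, follows for instance from the competitor $h_i-\min_j h_j$, where $h_i$ are harmonic extensions of $\phi_i$. This works because $\min_j\phi_j=0$ on $\partial\Omega$.

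Your argument proves more than \eqref{eq:7}:
\begin{itemize}
\item the interaction energy vanishes on all of $\Omega$;
\item the subsequence does not depend on $K_0$, and in fact, since every sequence $\varepsilon_n\to0$ has such a subsequence, $\varepsilon^{-1}\int_\Omega(u_1^\varepsilon u_2^\varepsilon u_3^\varepsilon)^2\,dx\to0$ along the whole family;
\item you obtain strong $H^1$ convergence to a constrained minimizer, which is the content of the remark after Proposition~\ref{prop:penalty_conv}.
\end{itemize}
What the citation buys the paper is the local-minimizer case, where no global segregated competitor is admissible.

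That case is a genuine gap in your proposal, and the construction you sketch does not close it.
\begin{itemize}
\item \emph{Killing one component on a sub-ball.} Take, e.g., $w_3=(1-\eta)u_3^\varepsilon$ with $\eta=1$ on $B'\Subset B$. This does control the penalty, since $0\le w_3\le u_3^\varepsilon$. But it is not a recovery sequence: $\bar u\in S$ does not force any single component to vanish on a whole ball, so the defect $D_B(w^\varepsilon)-D_B(u^\varepsilon)$ need not tend to zero. The comparison then only yields a uniform bound on $\varepsilon^{-1}\int_{B'}(u_1^\varepsilon u_2^\varepsilon u_3^\varepsilon)^2\,dx$. You get that bound for free anyway, by testing the equation with $u_i^\varepsilon\eta^2$ and using $0\le u_i^\varepsilon\le M$.
\item \emph{What the squeeze on $B$ actually needs.} You need competitors that carry the trace of $u^\varepsilon$ on $\partial B$ and whose energy converges to $D_B(\bar u)$. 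That means a non-segregated transition layer of cost $o(1)$.
\item \emph{Interpolating between $u^\varepsilon$ and $\bar u$.} This produces cross terms such as $\bar u_1\bar u_2u_3^\varepsilon$, whose $\varepsilon^{-1}$-weighted square integral is not controlled by weak $H^1$ convergence.
\item \emph{The monotone choice $w_i=u_i^\varepsilon-\eta\min_j u_j^\varepsilon$.} This bounds the penalty pointwise by that of $u^\varepsilon$. However, its Dirichlet cost is small only if $\min_j u_j^\varepsilon\to0$ strongly in $H^1_{\mathrm{loc}}$.
\end{itemize}
Either way, the missing ingredient is uniform or strong $H^1_{\mathrm{loc}}$ convergence of $u^\varepsilon$. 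Weak compactness alone does not give this, and it is exactly what \cite{soave2024partial1} supplies. Once you have it, no competitor is needed: test the equation with $u_i^\varepsilon\eta$ and pass to the limit, using that $\bar u_i$ is continuous and harmonic in $\{\bar u_i>0\}$. This gives \eqref{eq:7}.
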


\begin{proof}
This follows from the $\beta\to +\infty$ asymptotics for singularly perturbed segregation systems
proved in \cite{soave2024partial1}. Writing $\beta:=\varepsilon^{-1}$ and applying the corresponding interior estimate on any $K_0\Subset \Omega$
yields \eqref{eq:7} along a subsequence.
\end{proof}

\begin{lemma}[\cite{soave2024partial1}]\label{lem:C}
Assume that $\Omega$ has $C^1$ boundary and that the Dirichlet data $(\phi_1,\phi_2,\phi_3)$ admit a
nonnegative Lipschitz extension $\psi=(\psi_1,\psi_2,\psi_3)\in \mathrm{Lip}(\Omega)^3$ with $\psi_i|_{\partial\Omega}=\phi_i$ and satisfying the
partial segregation condition
\[
\psi_1\psi_2\psi_3 \equiv 0 \quad \text{in }\Omega.
\]
Let $u^\varepsilon$ be a family of (globally or locally) energy-minimizing solutions of \eqref{eq:system}
associated to \eqref{eq:energy6} in the sense of \cite{soave2024partial1}
(equivalently, set $\beta=\varepsilon^{-1}$ in their notation).
Then there exists $\bar\nu\in(0,1)$ depending only on $n$ such that, for every $\alpha\in(0,\bar\nu)$,
\[
\|u^\varepsilon\|_{C^{0,\alpha}(\Omega)}\le C \quad\text{with }C\text{ independent of }\varepsilon.
\]
Moreover, along a subsequence $\varepsilon_k\downarrow 0$ there exists a limit $u=(u_1,u_2,u_3)\in H^1(\Omega)^3\cap C^{0,\alpha}(\Omega)^3$
such that
\[
u^{\varepsilon_k}\to u \quad \text{in }H^1(\Omega)^3 \text{ and in }C^{0,\alpha}(\Omega)^3 \text{ for every }\alpha\in(0,\bar\nu),
\]
and the limiting profile satisfies
\[
u_1u_2u_3\equiv 0\quad\text{a.e. in }\Omega,\qquad \Delta u_i=0 \quad\text{in }\{u_i>0\}\ (i=1,2,3).
\]
\end{lemma}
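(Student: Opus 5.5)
The statement is attributed to \cite{soave2024partial1}, so the plan is mainly to reduce it to their framework. The first step is the change of parameter $\beta=\varepsilon^{-1}$. This turns \eqref{eq:system} and the energy \eqref{eq:energy6} into exactly the singularly perturbed partial segregation system studied there, with interaction term $\beta\,u_i\prod_{j\neq i}u_j^2$. I would then check their hypotheses one by one:
\begin{itemize}
\item $\Omega$ has $C^1$ boundary;
\item the data admit a nonnegative Lipschitz extension $\psi$ with $\psi_1\psi_2\psi_3\equiv 0$;
\item the family $u^\varepsilon$ consists of (local) minimizers.
\end{itemize}

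Next comes the uniform bounds that do not depend on $\beta$. Using $\psi$ as a competitor gives
$$E_\varepsilon(u^\varepsilon)\le E_\varepsilon(\psi)=\tfrac12\sum_i\|\nabla\psi_i\|_{L^2}^2,$$
uniformly in $\varepsilon$, because the penalty term of $\psi$ vanishes. Lemma~\ref{lem:1.2}, applied componentwise with $q=\prod_{j\ne i}u_j^2\ge0$, gives $0\le u_i^\varepsilon\le M$. Hence $u^\varepsilon$ is bounded in $H^1\cap L^\infty$.

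The core of the argument is the uniform $C^{0,\alpha}$ estimate for $\alpha<\bar\nu$, where $\bar\nu$ is the optimal exponent of an Alt--Caffarelli--Friedman/Almgren-type monotonicity formula adapted to the triple product. I would follow the standard contradiction-and-blow-up scheme.
\begin{enumerate}
\item Assume the Hölder seminorms $L_k$ blow up.
\item Pick points realizing the seminorm and rescale so that the seminorm becomes $1$. The effective parameter $\beta_k$ is multiplied by a power of $L_k$.
\item Pass to a limit that is globally $\alpha$-Hölder, nonconstant, and either solves a limit system with finite $\beta$ or is partially segregated and harmonic where positive.
\item Obtain a contradiction from Liouville theorems, using monotonicity formulas that force growth at rate at least $\bar\nu>\alpha$.
\end{enumerate}
Near the boundary, the Lipschitz data and the $C^1$ boundary let one extend this scheme using the segregated extension $\psi$.

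This Liouville/monotonicity step is the main obstacle. For the triple product the interaction is degenerate, since it vanishes when any single component vanishes. The classical two-phase ACF formula therefore does not apply directly, and this is precisely the new content of \cite{soave2024partial1}. I would cite it rather than reprove it.

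Given the uniform $C^{0,\alpha}$ bounds, Arzelà--Ascoli gives convergence in $C^{0,\alpha'}$ for every $\alpha'<\bar\nu$ along a subsequence. The $H^1$ bound gives weak $H^1$ convergence. The product condition $u_1u_2u_3\equiv0$ follows as in Proposition~\ref{prop:penalty_conv}.

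Harmonicity in $\{u_i>0\}$ follows from local uniform convergence. On a ball where $u_i\ge\delta>0$, $u_i^\varepsilon>\delta/2$ for small $\varepsilon$. Testing the equation with a cutoff then bounds $\int\beta\, u_i\prod_{j\neq i} u_j^2$, and hence $\beta\prod_{j\neq i}u_j^2\to0$ in $L^1_{loc}$, in the spirit of Lemma~\ref{lem:B}. So $\Delta u_i=0$ there.

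Strong $H^1$ convergence would come from comparing energies. Test the equation for $u_i^\varepsilon$ against $u_i^\varepsilon$ itself to get $\int|\nabla u_i^\varepsilon|^2 + \beta\int(u_1u_2u_3)^2 = \int_{\partial\Omega}\phi_i\partial_\nu u_i^\varepsilon$. Then pass to the limit using the vanishing of the interaction energy and the uniform gradient control near the boundary. This last step also relies on \cite{soave2024partial1}.
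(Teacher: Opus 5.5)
The paper gives no proof of its own for this lemma. It is imported from \cite{soave2024partial1} with $\beta=\varepsilon^{-1}$, so your reduction (check the hypotheses, cite the uniform H\"older bound and the Liouville/monotonicity machinery) follows the same route.

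\textbf{The harmonicity step has a gap.} Testing with a cutoff $\eta$ gives only
\[
\int_\Omega \eta\,\beta\,u_i^\varepsilon\prod_{j\ne i}(u_j^\varepsilon)^2\,dx=-\int_\Omega\nabla u_i^\varepsilon\cdot\nabla\eta\,dx ,
\]
which is a \emph{bound}, not vanishing. The right-hand side tends to $\langle\Delta u_i,\eta\rangle$, so saying the left-hand side tends to $0$ is exactly what has to be proved.

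Unlike the pairwise case, you cannot use exponential decay of the competitors. On $\{u_i>0\}$ the limit only satisfies $u_ju_k=0$. So $u_j$ and $u_k$ can both be positive on either side of an interface lying inside $\{u_i>0\}$; the boundary data of Example~\ref{ex:numerical_square} force this near $(\pm1,0)$. The nonnegative measure $\Delta u_i$ could a priori charge that interface.

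The missing input is the vanishing of the interaction energy, Lemma~\ref{lem:B}. On a ball where $u_i^\varepsilon\ge\delta/2$ you have $\beta u_i^\varepsilon\prod_{j\ne i}(u_j^\varepsilon)^2\le\frac{2}{\delta}\beta(u_1^\varepsilon u_2^\varepsilon u_3^\varepsilon)^2$, which tends to $0$ in $L^1$. Then $\Delta u_i=0$ follows.

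\textbf{For global minimizers, a simpler route.} Everything except the H\"older bound is elementary. Your boundary-flux route to strong $H^1$ convergence is then unnecessary; it also needs gradient control at $\partial\Omega$ that a $C^{0,\alpha}$ bound does not provide. Weak $H^1$ limits keep their traces, and $u\ge0$ with $u_1u_2u_3=0$. So $u$ is an admissible competitor with $E_{\varepsilon_k}(u)=\frac12\|\nabla u\|_{L^2}^2$, and
\[
\tfrac12\|\nabla u\|_{L^2}^2\le\liminf_{k}\tfrac12\|\nabla u^{\varepsilon_k}\|_{L^2}^2\le\limsup_{k}E_{\varepsilon_k}(u^{\varepsilon_k})\le\tfrac12\|\nabla u\|_{L^2}^2 .
\]
This chain gives three things:
\begin{itemize}
\item strong $H^1$ convergence;
\item $\varepsilon_k^{-1}\int_\Omega(u_1^{\varepsilon_k}u_2^{\varepsilon_k}u_3^{\varepsilon_k})^2\,dx\to0$;
\item minimality of $u$ over $S$, by comparing with any $w\in S$.
\end{itemize}
Harmonicity then follows directly. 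Since $u_ju_k=0$ on the open set $\{u_i>0\}$, replacing $u_i$ by $u_i+t\varphi$ with $\varphi\in C_c^\infty(\{u_i>0\})$ and $|t|$ small stays in $S$. The first variation gives $\Delta u_i=0$ there.

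\textbf{Local minimizers.} For merely local minimizers, neither this comparison nor your bound $E_\varepsilon(u^\varepsilon)\le E_\varepsilon(\psi)$ is available. The uniform energy bound, like the H\"older estimate, must then be taken from \cite{soave2024partial1}.
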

Define the Banach space $X:=(L^2(\Omega))^3$ with norm
\[
\|u\|_X := \max_{i=1,2,3}\|u_i\|_{L^2(\Omega)}.
\]
Define the closed convex set
\[
K := \{u\in X:\ 0\le u_i\le M \text{ a.e.\ in }\Omega,\ i=1,2,3\}.
\]

\subsection*{Jacobi/Picard map $T$}

For $u=(u_1,u_2,u_3)\in K$, define $T(u)=v$ where $v_i$ solves
\begin{equation}
\label{eq:8}
\begin{cases}
\Delta v_i = \dfrac{v_i}{\varepsilon}(u_j u_\ell)^2 & \text{in }\Omega,\\[1ex]
v_i|_{\partial\Omega}=\phi_i,
\end{cases}
\qquad \{i,j,\ell\}=\{1,2,3\}.
\end{equation}
Equivalently, $-\Delta v_i + \dfrac{(u_j u_\ell)^2}{\varepsilon}v_i=0$.

\subsection*{Gauss--Seidel map $G$}

For $u\in K$, define $G(u)=v$ sequentially by
\begin{equation}
\label{eq:9}
\begin{cases}
-\Delta v_1 + \dfrac{(u_2u_3)^2}{\varepsilon}v_1 = 0, & v_1|_{\partial\Omega}=\phi_1,\\[1ex]
-\Delta v_2 + \dfrac{u_3^2(u_1^2+v_1^2)}{2\varepsilon}\,v_2 = 0, & v_2|_{\partial\Omega}=\phi_2,\\[1ex]
-\Delta v_3 + \dfrac{(u_1u_2)^2 + (v_1v_2)^2}{2\varepsilon}\,v_3 = 0, & v_3|_{\partial\Omega}=\phi_3.
\end{cases}
\end{equation}

The next lemma shows that both fixed-point maps preserve the order interval $K$.

\begin{lemma}\label{lem:1.5}
For every $u\in K$, one has $T(u)\in K$ and $G(u)\in K$.
\end{lemma}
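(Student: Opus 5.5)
The plan is to reduce both claims to the linear Dirichlet theory of Lemmas~\ref{lem:1.1} and \ref{lem:1.2}. For each component we only need to check that the equation defining $v_i$ has the form $-\Delta v_i + \frac{q_i}{\varepsilon}v_i = 0$ with $q_i\in L^\infty(\Omega)$ and $q_i\ge 0$. Once that is verified, Lemma~\ref{lem:1.1} gives a unique $v_i\in H^1(\Omega)$ with trace $\phi_i$, so the maps $T$ and $G$ are well defined. Lemma~\ref{lem:1.2} then gives
\[
0\le v_i\le \|\phi_i\|_{L^\infty(\partial\Omega)}\le M \quad\text{a.e. in }\Omega .
\]
In particular $v_i\in L^2(\Omega)$, so $v\in X$, and the order bounds say exactly that $v\in K$.

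For the Jacobi map $T$, take $u\in K$. Then $0\le u_j,u_\ell\le M$ a.e., so
\[
q_i:=(u_ju_\ell)^2 \quad\text{satisfies}\quad 0\le q_i\le M^4 ,
\]
hence $q_i\in L^\infty(\Omega)$ and $q_i\ge 0$. Applying the two lemmas to each $i=1,2,3$ separately gives $T(u)\in K$.

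For the Gauss--Seidel map $G$, the argument is sequential, because the coefficients of the later equations involve the components already computed.
\begin{itemize}
\item For $v_1$ the coefficient is $(u_2u_3)^2\in[0,M^4]$, exactly as for $T$, so $0\le v_1\le M$.
\item For $v_2$ the coefficient is $\tfrac12 u_3^2(u_1^2+v_1^2)$. It is nonnegative and bounded by $M^4$, using $u\in K$ and the bound on $v_1$ just obtained. The lemmas then give $0\le v_2\le M$.
\item For $v_3$ the coefficient is $\tfrac12\bigl((u_1u_2)^2+(v_1v_2)^2\bigr)\in[0,M^4]$, using the bounds on $v_1$ and $v_2$. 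Hence $0\le v_3\le M$.
\end{itemize}
This shows $G(u)\in K$.

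The only step needing care is the $L^\infty$ bound on the coefficients in the second and third equations of $G$. These coefficients depend on the newly computed $v_1$ and $v_2$, which are known a priori only to lie in $H^1(\Omega)$. The inductive order above handles this: the uniform bound from Lemma~\ref{lem:1.2} at each stage supplies the $L^\infty$ hypothesis needed at the next stage.

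Measurability of the coefficients is immediate, since they are products of measurable functions. The standing assumption $\phi_i\in H^{1/2}(\partial\Omega)\cap L^\infty(\partial\Omega)$ with $\phi_i\ge 0$ is exactly what both lemmas require.
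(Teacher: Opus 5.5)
Your proof is correct and follows the paper's argument: the paper also applies Lemma~\ref{lem:1.2} componentwise after noting that all coefficients are nonnegative and bounded. Your sequential treatment of $G$, which uses the bounds on $v_1$ and $v_2$ to bound the later coefficients by $M^4$, spells out what the paper's one-line proof leaves implicit.
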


\begin{proof}
All coefficients in \eqref{eq:8} and \eqref{eq:9} are nonnegative and bounded.
By Lemma~\ref{lem:1.2} applied componentwise,  we obtain $0\le v_i\le \|\phi_i\|_{L^\infty(\partial\Omega)}\le M$ a.e., hence $v\in K$.
\end{proof}

We now verify the compactness and continuity properties needed for Schauder's fixed point
theorem.

\begin{lemma}\label{lem:1.6}
The map $T:K\to X$ is compact (i.e.\ $T(K)$ is relatively compact in $X$).
\end{lemma}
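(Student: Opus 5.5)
The plan is to show that $T(K)$ is bounded in $(H^1(\Omega))^3$ and then invoke the Rellich--Kondrachov theorem, since $H^1(\Omega)$ embeds compactly into $L^2(\Omega)$ on a bounded Lipschitz domain. Fix $u\in K$ and set $v=T(u)$. For each $i$ the coefficient in \eqref{eq:8} is $q_i:=(u_ju_\ell)^2$. Because $0\le u_j,u_\ell\le M$ a.e., we have $0\le q_i\le M^4$ a.e., so $q_i\in L^\infty(\Omega)$ with $q_i\ge 0$ and $\|q_i\|_{L^\infty}\le M^4$, uniformly over $u\in K$.

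Lemma~\ref{lem:1.1} then applies componentwise. It shows that $v_i$ is well defined and gives
\[
\|v_i\|_{H^1(\Omega)}\le C(\Omega)\Bigl(1+\frac{M^4}{\varepsilon}\Bigr)\|\phi_i\|_{H^{1/2}(\partial\Omega)}.
\]
The right-hand side depends only on $\Omega$, $\varepsilon$, $M$ and the data $\phi_i$, not on $u$. Since $\varepsilon>0$ is fixed throughout, $T(K)$ is a bounded subset of $(H^1(\Omega))^3$.

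To conclude, take any sequence $v^n=T(u^n)$ with $u^n\in K$. The bound above makes it bounded in $(H^1)^3$. Rellich--Kondrachov, applied successively to the three components, yields a subsequence converging strongly in $(L^2(\Omega))^3$, hence in the norm $\|\cdot\|_X$ (all product norms are equivalent). Thus $T(K)$ is relatively compact in $X$. By Lemma~\ref{lem:1.5} the limit also lies in $K$, since $K$ is closed in $X$, although this is not needed for relative compactness.

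There is no serious obstacle. The only point to check is that the constant in \eqref{eq:5} is uniform in $u\in K$, and this follows from the pointwise bound $0\le u_i\le M$ defining $K$. The price is that the bound blows up as $\varepsilon\to0$, which is harmless here because $\varepsilon$ is fixed. The Lipschitz regularity of $\Omega$ is used both for the extension operator in Lemma~\ref{lem:1.1} and for the compact embedding.
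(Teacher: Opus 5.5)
Your proposal is correct and follows the paper's proof: the pointwise bound $0\le q_i\le M^4$ makes the estimate of Lemma~\ref{lem:1.1} uniform in $u\in K$, so $T(K)$ is bounded in $(H^1(\Omega))^3$, and Rellich--Kondrachov then gives relative compactness in $X$. Your additional remarks (fixed $\varepsilon$, closedness of $K$) are accurate but, as you note, not needed.
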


\begin{proof}
Fix $u\in K$ and write $v=T(u)$.
For each $i$, $v_i$ solves
\[
-\Delta v_i + \frac{q_i}{\varepsilon}v_i=0,\qquad v_i|_{\partial\Omega}=\phi_i,
\qquad q_i:=(u_j u_\ell)^2.
\]
Since $0\le u_j,u_\ell\le M$, we have $0\le q_i\le M^4$ a.e.
Hence Lemma~\ref{lem:1.1} gives
\[
\|v_i\|_{H^1(\Omega)} \le C(\Omega)\Bigl(1+\frac{M^4}{\varepsilon}\Bigr)\|\phi_i\|_{H^{1/2}(\partial\Omega)}.
\]
Thus $T(K)$ is bounded in $(H^1(\Omega))^3$.
By the Rellich--Kondrachov theorem, the embedding $H^1(\Omega)\hookrightarrow L^2(\Omega)$ is compact,
so $T(K)$ is relatively compact in $X$.
\end{proof}

\begin{lemma}\label{lem:1.7}
The map $T:K\to X$ is continuous.
\end{lemma}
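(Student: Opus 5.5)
We prove continuity by estimating differences directly in $L^2$ with the resolvent bound of Lemma~\ref{lem:1.4}; this even gives a Lipschitz estimate. Fix $u,\tilde u\in K$ and set $v=T(u)$, $\tilde v=T(\tilde u)$. For a fixed index $i$ with $\{i,j,\ell\}=\{1,2,3\}$, write
\[
q_i=(u_ju_\ell)^2,\qquad \tilde q_i=(\tilde u_j\tilde u_\ell)^2 .
\]
The functions $v_i$ and $\tilde v_i$ have the same trace $\phi_i$, so $w_i:=v_i-\tilde v_i\in H_0^1(\Omega)$. Subtracting the two weak formulations gives
\[
-\Delta w_i + \frac{q_i}{\varepsilon}w_i = -\frac{q_i-\tilde q_i}{\varepsilon}\,\tilde v_i \quad\text{in }\Omega .
\]
The coefficient $q_i$ is nonnegative and bounded by $M^4$, since $u\in K$. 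The right-hand side lies in $L^2(\Omega)$ because $0\le \tilde v_i\le M$ by Lemma~\ref{lem:1.5}.

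We then apply Lemma~\ref{lem:1.4} to this equation:
\[
\|w_i\|_{L^2}\le \frac{1}{\lambda_D\varepsilon}\,\|(q_i-\tilde q_i)\tilde v_i\|_{L^2}\le \frac{M}{\lambda_D\varepsilon}\,\|q_i-\tilde q_i\|_{L^2}.
\]
All of $u_j,u_\ell,\tilde u_j,\tilde u_\ell$ take values in $[0,M]$. The second inequality in \eqref{eq:ineq3} therefore holds pointwise:
\[
|q_i-\tilde q_i|\le 2M^3\bigl(|u_j-\tilde u_j|+|u_\ell-\tilde u_\ell|\bigr)\quad\text{a.e.}
\]
Taking $L^2$ norms gives $\|q_i-\tilde q_i\|_{L^2}\le 4M^3\|u-\tilde u\|_X$. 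Combining the two bounds and maximizing over $i$ yields
\[
\|T(u)-T(\tilde u)\|_X\le \frac{4M^4}{\lambda_D\,\varepsilon}\,\|u-\tilde u\|_X .
\]
Hence $T$ is Lipschitz continuous on $K$, and in particular continuous.

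The only delicate point is the justification of the difference equation and the test function. Both $v_i$ and $\tilde v_i$ are $H^1$ weak solutions with the same trace, so $w_i\in H^1_0(\Omega)$ is an admissible test function. The product $\frac{q_i-\tilde q_i}{\varepsilon}\tilde v_i$ is bounded, hence in $L^2(\Omega)$. The pointwise bounds needed for \eqref{eq:ineq3} come from $u,\tilde u\in K$ and from Lemma~\ref{lem:1.5} for $\tilde v$. Once these facts are checked, the argument is a direct computation, with no compactness needed.
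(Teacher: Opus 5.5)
Your proof is correct and follows the paper's argument. You subtract the two linear Dirichlet problems, apply the resolvent bound of Lemma~\ref{lem:1.4} using $0\le \tilde v_i\le M$, and control the coefficient difference pointwise via \eqref{eq:ineq3}. The only difference is presentational: you state the result as a Lipschitz bound with constant $4M^4/(\varepsilon\lambda_D)$ instead of arguing along a convergent sequence, and that bound is exactly the global estimate the paper records separately in Lemma~\ref{lem:1.8}.
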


\begin{proof}
Let $u^{(m)}\to u$ in $X$ with $u^{(m)},u\in K$, and set $v^{(m)}=T(u^{(m)})$, $v=T(u)$.
Fix $i$ and denote
\[
q^{(m)} := (u^{(m)}_j u^{(m)}_\ell)^2,\qquad q:=(u_j u_\ell)^2.
\]
Using \eqref{eq:ineq3} and $0\le u^{(m)}_r,u_r\le M$, yields
\[
\|q^{(m)}-q\|_{L^2} \le 2M^3\bigl(\|u^{(m)}_j-u_j\|_{L^2}+\|u^{(m)}_\ell-u_\ell\|_{L^2}\bigr)\to 0.
\]
Let $w^{(m)}:=v^{(m)}_i-v_i\in H_0^1(\Omega)$.
Subtracting the equations gives
\[
-\Delta w^{(m)} + \frac{q^{(m)}}{\varepsilon}w^{(m)} = \frac{q-q^{(m)}}{\varepsilon}v_i \quad\text{in }\Omega,\qquad w^{(m)}|_{\partial\Omega}=0.
\]
By Lemma~\ref{lem:1.4} and $0\le v_i\le M$ (Lemma~\ref{lem:1.5}), we get
\[
\|w^{(m)}\|_{L^2} \le \frac{1}{\lambda_D}\left\|\frac{q-q^{(m)}}{\varepsilon}v_i\right\|_{L^2}
\le \frac{M}{\varepsilon\lambda_D}\|q^{(m)}-q\|_{L^2}\to 0.
\]
Thus $v^{(m)}_i\to v_i$ in $L^2$. Taking the maximum over $i$ yields $v^{(m)}\to v$ in $X$.
\end{proof}

The next lemma provides a global Lipschitz estimate for the Picard map $T$ in $X$. It also
records an interior improvement on compact sets whenever the competing pair $(v_jv_\ell)^2$
stays uniformly positive in a neighborhood.

\begin{lemma}\label{lem:1.8}
On above defined set $K$, the map $T$ is Lipschitz, i.e.
\[
\|T(u)-T(v)\|_X \le L_T\|u-v\|_X,\qquad L_T:=\frac{4M^4}{\varepsilon\lambda_D}.
\]
Moreover, fix a compact set $K_0\Subset\Omega$ and an open neighborhood $U\Subset\Omega$ of $K_0$.
If for some index $i\in\{1,2,3\}$ there exists $\delta>0$ such that
\[
(v_jv_\ell)^2 \ge \delta \quad\text{a.e.\ in }U,\qquad \{i,j,\ell\}=\{1,2,3\},
\]
then there exist constants $C=C(\Omega,U,K_0)>0$ and $c=c(\Omega,U,K_0)>0$ such that
\[
\|T(u)_i - T(v)_i\|_{L^2(K_0)} \le C\frac{4M^4}{\varepsilon\lambda_D}\exp\!\Bigl(-c\sqrt{\frac{\delta}{\varepsilon}}\Bigr)\|u-v\|_X.
\]
\end{lemma}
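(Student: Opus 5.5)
The Lipschitz bound will come from the same difference-equation argument as in Lemma~\ref{lem:1.7}, made quantitative. Fix $u,v\in K$ and an index $i$, with $\{i,j,\ell\}=\{1,2,3\}$. Write $a:=T(u)_i$ and $b:=T(v)_i$, and set
\[
q^u:=(u_ju_\ell)^2,\qquad q^v:=(v_jv_\ell)^2 .
\]
Subtracting the two equations \eqref{eq:8}, the difference $w:=a-b\in H_0^1(\Omega)$ satisfies
\[
-\Delta w+\frac{q^v}{\varepsilon}w=\frac{q^v-q^u}{\varepsilon}\,a .
\]
We choose to keep the coefficient $q^v$ on the left and put the $u$-solution on the right. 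This matters for the interior part, because the positivity hypothesis is made on $(v_jv_\ell)^2$.

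\textbf{Global estimate.} Lemma~\ref{lem:1.5} gives $0\le a\le M$. Lemma~\ref{lem:1.4} then yields
\[
\|w\|_{L^2}\le \frac{M}{\varepsilon\lambda_D}\,\|q^v-q^u\|_{L^2}.
\]
By \eqref{eq:ineq3},
\[
\|q^v-q^u\|_{L^2}\le 2M^3\bigl(\|u_j-v_j\|_{L^2}+\|u_\ell-v_\ell\|_{L^2}\bigr)\le 4M^3\|u-v\|_X .
\]
Combining the two gives $\|w\|_{L^2}\le \frac{4M^4}{\varepsilon\lambda_D}\|u-v\|_X$. Taking the maximum over $i$ gives $L_T$.

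\textbf{Interior estimate.} Decompose $w=w_1+w_2$ on a ball or on $U$. Here $w_1$ solves the inhomogeneous equation with zero data on $\partial U$, and $w_2$ solves the homogeneous equation $-\Delta w_2+\frac{q^v}{\varepsilon}w_2=0$ in $U$ with trace $w|_{\partial U}$.

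For $w_2$, Lemma~\ref{lem:A} (applied on $U$, with $q^v\ge\delta$ there) gives
\[
\|w_2\|_{L^\infty(K_0)}\le C\,\|w\|_{L^\infty(\partial U)}\,e^{-c\sqrt{\delta/\varepsilon}} .
\]
For $w_1$, testing with $w_1$ gives
\[
\frac{\delta}{\varepsilon}\|w_1\|_{L^2(U)}^2\le \|f\|_{L^2(U)}\|w_1\|_{L^2(U)},
\]
so $\|w_1\|_{L^2}\le \frac{\varepsilon}{\delta}\|f\|_{L^2}$. This is a gain of order $\varepsilon/\delta$, but it is only algebraic, not exponential. Then $\|w\|_{L^2(K_0)}\le\|w_1\|+|K_0|^{1/2}\|w_2\|_\infty$.

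\textbf{Main obstacle.} The trouble is that the trace of $w$ on $\partial U$ is only controlled in $L^\infty$ (by $M$), not by $\|u-v\|_X$. Also, the source term $w_1$ does not decay exponentially. To get the stated linear dependence on $\|u-v\|_X$, I would try three things:
\begin{enumerate}
\item Use an exponentially weighted (Agmon-type) energy estimate. Test the difference equation with $\eta^2 e^{2\gamma\rho}w$, where $\rho$ is the distance to $\partial U$ and $\gamma\sim\sqrt{\delta/\varepsilon}/2$. This pairs the exponential weight against the source. The source contribution stays controlled by $\frac{M}{\varepsilon}\|q^u-q^v\|_{L^2}$ while the boundary contribution decays.
\item Alternatively, use the global $L^2$ bound already obtained for $w$ on a slightly larger set, and interior $L^\infty$–$L^2$ (De Giorgi/Moser) bounds to control $w$ on an intermediate surface. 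Feed this into the comparison argument of Lemma~\ref{lem:A}, so that the homogeneous part carries the factor $\frac{4M^4}{\varepsilon\lambda_D}\|u-v\|_X\,e^{-c\sqrt{\delta/\varepsilon}}$.
\item If the source part only yields an algebraic gain, absorb it by shrinking $c$ and enlarging $C$.
\end{enumerate}
I expect this absorption step to be delicate: for fixed $\delta$ and small $\varepsilon$, an algebraic factor cannot be dominated by an exponential one. So a genuinely weighted estimate is the route I would commit to first.
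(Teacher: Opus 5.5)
\textbf{The global part is fine; the interior part has a gap that cannot be closed, because the interior claim is false.}

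Your global estimate is correct and is the paper's argument (Lemma~\ref{lem:1.4} plus \eqref{eq:ineq3}). The paper keeps $q^u$ on the left with $T(v)_i$ in the source; you keep $q^v$ on the left with $T(u)_i$ in the source. That difference is immaterial for the global bound.

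The interior estimate is not proved, and the Agmon route you commit to cannot prove it. Testing with $\eta^2e^{2\gamma\rho}w$ multiplies the source by the same weight. So the best the weighted identity yields for the source contribution on $K_0$ is of order $\frac{\varepsilon}{\delta}\|f\|_{L^2}$ near $K_0$, with no exponential factor. That changes only if $f=\frac{q^v-q^u}{\varepsilon}T(u)_i$ is itself exponentially small there, which requires a lower bound on $(u_ju_\ell)^2$ in $U$. No such bound is assumed. With the paper's arrangement the source is small by Lemma~\ref{lem:A}, but then the left coefficient $q^u$ has no lower bound and the homogeneous part need not decay.

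Your suspicion that the algebraic gain cannot be upgraded is correct, and no argument can rescue the claim as stated. Counterexample:
\begin{itemize}
\item Take $\phi_i\equiv M$, $\phi_j=\phi_\ell\equiv 0$, $u_i=v_i$, $u_j=u_\ell=0$, and $v_j=v_\ell=M\chi_U$. Then $(v_jv_\ell)^2=M^4=:\delta$ on $U$.
\item $T(u)_i\equiv M$, while Lemma~\ref{lem:A} gives $\|T(v)_i\|_{L^\infty(K_0)}\le C'Me^{-c'M^2/\sqrt{\varepsilon}}$. Hence $\|T(u)_i-T(v)_i\|_{L^2(K_0)}\to M|K_0|^{1/2}$ as $\varepsilon\downarrow 0$.
\item On the other side, $\|u-v\|_X=M|U|^{1/2}$, so the claimed right-hand side is $C\frac{4M^5|U|^{1/2}}{\varepsilon\lambda_D}e^{-cM^2/\sqrt{\varepsilon}}$. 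This tends to $0$ for any $\varepsilon$-independent $C,c>0$.
\end{itemize}
For fixed $\varepsilon$ the inequality follows trivially from the global bound, so its entire content is the $\varepsilon$-uniformity, and that is what fails.

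The paper's own proof of this part does not close the gap either. It shows via Lemma~\ref{lem:A} that $T(v)_i$ is exponentially small on $K_0$ and then asserts the bound ``after absorbing constants''. This ignores that Lemma~\ref{lem:1.4} is a global estimate while the source is small only inside $U$, and that $q^u$ has no lower bound. These are exactly the two obstacles you identified.

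A correct statement needs the lower bound $\ge\delta$ on $U$ for both $(u_ju_\ell)^2$ and $(v_jv_\ell)^2$. Then use the paper's arrangement $-\Delta w+\frac{q^u}{\varepsilon}w=f:=\frac{q^v-q^u}{\varepsilon}T(v)_i$, and fix $K_0\Subset U'\Subset U$:
\begin{itemize}
\item One has $\|f\|_{L^2(U')}\le \frac{4M^3}{\varepsilon}\|u-v\|_X\cdot C_0Me^{-c_0\sqrt{\delta/\varepsilon}}$. The zero-data part of $w$ on $U'$ is therefore bounded by $\lambda_D(U')^{-1}$ times this.
\item The homogeneous part, with data $w|_{\partial U'}$, decays by the barrier of Lemma~\ref{lem:A}, now using $q^u\ge\delta$.
\item $\|w\|_{L^\infty(\partial U')}$ is linear in $\|u-v\|_X$. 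This follows from local boundedness for the subsolution $|w|$ of $-\Delta|w|\le|f|$ (with $n\le 3$ and $f\in L^2$), combined with your global $L^2$ bound.
\end{itemize}
This is your route 2, which works once the missing hypothesis on $u$ is added.
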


\begin{proof}
Fix $i$ and write $U:=T(u)_i$, $V:=T(v)_i$. Let $q_u:=(u_j u_\ell)^2$ and $q_v:=(v_j v_\ell)^2$.
Set $w:=U-V\in H_0^1(\Omega)$. Subtracting gives
\[
-\Delta w + \frac{q_u}{\varepsilon}w = \frac{q_v-q_u}{\varepsilon}V \quad\text{in }\Omega.
\]
Lemma~\ref{lem:1.4} yields
\[
\|w\|_{L^2(\Omega)} \le \frac{1}{\lambda_D}\left\|\frac{q_v-q_u}{\varepsilon}V\right\|_{L^2(\Omega)}
\le \frac{M}{\varepsilon\lambda_D}\|q_v-q_u\|_{L^2(\Omega)}.
\]
Using \eqref{eq:ineq3} and $0\le u_r,v_r\le M$, implies
\[
\|q_v-q_u\|_{L^2} = \|(v_jv_\ell)^2-(u_j u_\ell)^2\|_{L^2}
\le 2M^3\bigl(\|u_j-v_j\|_{L^2}+\|u_\ell-v_\ell\|_{L^2}\bigr)
\le 4M^3\|u-v\|_X.
\]
Hence $\|U-V\|_{L^2(\Omega)}\le \dfrac{4M^4}{\varepsilon\lambda_D}\|u-v\|_X$. Taking the maximum over $i$ proves the global claim.

For the interior improvement, assume $q_v\ge \delta$ a.e.\ in $U$. Then $V$ solves $-\Delta V + \dfrac{q_v}{\varepsilon}V=0$ in $\Omega$ with boundary trace $\phi_i$.
By Lemma~\ref{lem:A} we obtain
\[
\|V\|_{L^\infty(K_0)} \le C_0\|\phi_i\|_{L^\infty(\partial\Omega)}\exp\!\Bigl(-c_0\sqrt{\frac{\delta}{\varepsilon}}\Bigr)
\le C_0M\exp\!\Bigl(-c_0\sqrt{\frac{\delta}{\varepsilon}}\Bigr).
\]
Using this exponential smallness on $K_0$ yields the stated bound after absorbing constants into $C,c$.
\end{proof}

\begin{remark}\label{rem:1.1}
In Lemma~\ref{lem:1.8}, the interior factor $\exp\!\bigl(-c\sqrt{\delta/\varepsilon}\bigr)$
is small as $\varepsilon\downarrow 0$ precisely when $\delta/\varepsilon\to +\infty$.
More generally, if the lower bound is allowed to depend on $\varepsilon$,
\[
(v_jv_\ell)^2 \ge \delta_\varepsilon \quad \text{a.e.\ on }U,
\]
then the same argument yields the interior factor $\exp\!\bigl(-c\sqrt{\delta_\varepsilon/\varepsilon}\bigr)$.
Consequently, if $\delta_\varepsilon=O(\varepsilon)$, the exponential factor remains bounded away from $0$
as $\varepsilon\downarrow 0$, and no $\varepsilon$-driven improvement is obtained from this mechanism.
\end{remark}

\begin{remark}\label{rem:1.2}
The global constant $L_T=\dfrac{4M^4}{\varepsilon\lambda_D}$ in Lemma~\ref{lem:1.8} is obtained by a worst-case bound
using only $0\le u_i\le M$. In the strong-competition regime $\varepsilon\downarrow 0$ for variationally selected families,
Lemmas~\ref{lem:A} and~\ref{lem:B} provide two additional interior mechanisms on compact sets $K_0\Subset \Omega$.
Lemma~\ref{lem:A} yields exponential suppression of $u_i^\varepsilon$ in regions where the ``potential''
$q_i^\varepsilon:=(u_j^\varepsilon u_\ell^\varepsilon)^2$ is bounded below away from $0$.
Lemma~\ref{lem:B} shows that the weighted ternary interaction energy
$\int_{K_0}\varepsilon^{-1}(u_1^\varepsilon u_2^\varepsilon u_3^\varepsilon)^2\,dx$ vanishes along a subsequence.

These two facts allow one to refine interior estimates beyond the global $L^\infty$-based bounds when $\varepsilon$ is small.
\end{remark}

The next lemma establishes a Lipschitz estimate for the Gauss--Seidel map $G$ in $X$, together
with an interior exponential improvement under a local lower bound on the corresponding
Gauss--Seidel coefficient.

\begin{lemma}\label{lem:1.9}
On $K$, the map $G$ is Lipschitz:
\[
\|G(u)-G(v)\|_X \le L_G\|u-v\|_X,\qquad
L_G := K\bigl(2+14K+8K^2\bigr),\qquad
K:=\frac{M^4}{\varepsilon\lambda_D}.
\]
Moreover, fix $K_0\Subset \Omega$ and an open neighborhood $U\Subset \Omega$ of $K_0$.
Let $a:=G(u)$ and $b:=G(v)$, and set $d_i:=a_i-b_i$.
If, for some $\delta>0$, one of the following lower bounds holds a.e.\ in $U$:
\begin{align*}
\text{(b1)}\quad & (v_2v_3)^2 \ge \delta,\\
\text{(b2)}\quad & \frac12 v_3^2\bigl(v_1^2+b_1^2\bigr)\ge \delta, \\
\text{(b3)}\quad & \frac12\bigl((v_1v_2)^2+(b_1b_2)^2\bigr)\ge \delta, 
\end{align*}
then there exist constants $C=C(\Omega,U,K_0)>0$ and $c=c(\Omega,U,K_0)>0$ such that, respectively,
\[
\|d_1\|_{L^2(K_0)} \le C(4K)e^{-c\sqrt{\delta/\varepsilon}}\|u-v\|_X,\;\;
\|d_2\|_{L^2(K_0)} \le C\,K(3+4K)e^{-c\sqrt{\delta/\varepsilon}}\|u-v\|_X,
\]
\[
\|d_3\|_{L^2(K_0)} \le C\,K(2+14K+8K^2)e^{-c\sqrt{\delta/\varepsilon}}\|u-v\|_X.
\]
\end{lemma}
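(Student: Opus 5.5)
We will mimic the proof of Lemma~\ref{lem:1.8}, treating the three Gauss--Seidel substeps in order and propagating the differences $d_1,d_2,d_3$ through the cascade. Throughout, $a=G(u)$ and $b=G(v)$ lie in $K$ by Lemma~\ref{lem:1.5}, so every quantity entering the coefficients takes values in $[0,M]$. Hence \eqref{eq:ineq3} is available, together with the elementary bound $|xyz-x'y'z'|\le M^2(|x-x'|+|y-y'|+|z-z'|)$ and its variants for products of squares.

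\emph{Step 1.} For $d_1$, write $q_1^u=(u_2u_3)^2$ and $q_1^v=(v_2v_3)^2$. Then $d_1\in H_0^1(\Omega)$ and
\[
-\Delta d_1+\frac{q_1^u}{\varepsilon}d_1=\frac{q_1^v-q_1^u}{\varepsilon}b_1 .
\]
Lemma~\ref{lem:1.4} combined with \eqref{eq:ineq3} gives $\|d_1\|_{L^2}\le \frac{M}{\varepsilon\lambda_D}\,4M^3\|u-v\|_X=4K\|u-v\|_X$.

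\emph{Step 2.} For $d_2$, the coefficient is $q_2^u=\tfrac12u_3^2(u_1^2+a_1^2)$, and we subtract $q_2^v=\tfrac12v_3^2(v_1^2+b_1^2)$. Splitting the difference gives
\[
|q_2^u-q_2^v|\le M^3|u_3-v_3|+\tfrac12 M^2\bigl(2M|u_1-v_1|+2M|a_1-b_1|\bigr),
\]
so that $\|q_2^u-q_2^v\|_{L^2}\le M^3(2\|u-v\|_X+\|d_1\|_{L^2})$. Inserting Step~1 and applying Lemma~\ref{lem:1.4} once more yields a bound of the form $K(2+4K)\|u-v\|_X$, consistent with the stated $K(3+4K)$ up to the bookkeeping of constants.

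\emph{Step 3.} For $d_3$, the same scheme applies: the coefficient difference contains $(u_1u_2)^2-(v_1v_2)^2$, which is bounded by $4M^3\|u-v\|_X$, and $(a_1a_2)^2-(b_1b_2)^2$, which is bounded by $2M^3(\|d_1\|+\|d_2\|)$. Substituting Steps~1--2 produces a polynomial in $K$ of degree $3$, bounded by $K(2+14K+8K^2)$. Since this last bound dominates the others (note $K\le K(2+14K+8K^2)$), taking the maximum over $i$ gives $L_G$. The main nuisance here is only tracking constants so that they match the stated coefficients exactly. If the crude splitting gives slightly different numbers, we would present the bound with whatever explicit constants arise and note that they are dominated by the stated ones.

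\emph{Interior improvement.} For the interior estimates we follow the second half of Lemma~\ref{lem:1.8}. Under (b$i$), the component $b_i$ solves $-\Delta b_i+\frac{q_i^v}{\varepsilon}b_i=0$ with $q_i^v\ge\delta$ on $U$, where $q_i^v$ is exactly the coefficient listed in (b1)--(b3). Lemma~\ref{lem:A} then gives $\|b_i\|_{L^\infty(K_0)}\le C M e^{-c\sqrt{\delta/\varepsilon}}$. We would use this smallness of $b_i$ on the right-hand side $\frac{q_i^v-q_i^u}{\varepsilon}b_i$, exactly as in Lemma~\ref{lem:1.8}.

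The genuine obstacle is that Lemma~\ref{lem:1.4} is a global estimate, whereas the forcing is small only on $K_0$. To handle this, we would instead write $d_i=a_i-b_i$ and apply Lemma~\ref{lem:A}-type barrier arguments on a ball inside $U$. Alternatively, we would accept the paper's level of rigor in Lemma~\ref{lem:1.8} and absorb the localization into the constants $C,c$, multiplying the global factors from Steps~1--3 by the exponential.
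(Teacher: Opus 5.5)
Your global Lipschitz argument follows the paper's own proof. You apply Lemma~\ref{lem:1.4} in turn to $d_1,d_2,d_3$, with source $\varepsilon^{-1}(q_r^v-q_r^u)b_r$, and control the coefficient differences by \eqref{eq:ineq3}. Two small corrections are needed.

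\emph{Step~2 constant.} Since $u_1^2+a_1^2\le 2M^2$, the first term is $\tfrac12|u_3^2-v_3^2|(u_1^2+a_1^2)\le 2M^3|u_3-v_3|$, not $M^3|u_3-v_3|$. This gives $\|q_2^u-q_2^v\|_{L^2}\le M^3(3\|u-v\|_X+\|d_1\|_{L^2})$, which is exactly the paper's $K(3+4K)$. Your $K(2+4K)$ is not justified.

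\emph{Final maximum.} Taking the maximum over $r$ requires $4K\le K(2+14K+8K^2)$, i.e.\ $K\ge(\sqrt{65}-7)/8$. Your remark $K\le L_G$ does not show this, and the paper skips the same point. For small $K$ the constant you actually obtain is $\max\{4K,L_G\}$. This is harmless in the regime $\varepsilon\downarrow 0$.

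The genuine gap is the interior improvement, and you located it correctly. The paper's proof is exactly your fallback: Lemma~\ref{lem:A} makes $b_r$ exponentially small on $K_0$, and then the constants are ``absorbed''. That is not an argument, because $d_r$ on $K_0$ depends on the source everywhere in $\Omega$.

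Your barrier alternative cannot repair it either. Hypotheses (b1)--(b3) bound only the $v$-side coefficient, whereas $d_r$ solves an equation with potential $q_r^u/\varepsilon$, which may vanish on $U$. Rewriting it with potential $q_r^v/\varepsilon$ instead puts $a_r$, which need not be small, into the source.

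In fact the estimate is false. Take $\phi_1\equiv M$, $v_2=v_3=M\chi_U$ (so (b1) holds with $\delta=M^4$) and $u=(v_1,0,v_3)$, so that $\|u-v\|_X=M|U|^{1/2}$. Then $q_1^u\equiv 0$, so $a_1\equiv M$. Meanwhile $b_1\le C_0Me^{-c_0M^2/\sqrt{\varepsilon}}$ on $K_0$ by Lemma~\ref{lem:A}. Hence $\|d_1\|_{L^2(K_0)}\to M|K_0|^{1/2}$ as $\varepsilon\downarrow0$, whereas the claimed bound equals $C\,\frac{4M^5|U|^{1/2}}{\varepsilon\lambda_D}e^{-cM^2/\sqrt{\varepsilon}}\to 0$. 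The interior part of Lemma~\ref{lem:1.8} fails in the same way.

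A provable version must also assume $q_r^u\ge\delta$ on $U$. Then split the source in two:
\begin{itemize}
\item its restriction to a neighborhood $U'$ of $K_0$ with $\overline{U'}\subset U$, where $b_r$ is exponentially small and Lemma~\ref{lem:1.4} applies;
\item the remainder, whose response solves the homogeneous equation with potential at least $\delta/\varepsilon$ in $U'$.
\end{itemize}
The barrier of Lemma~\ref{lem:A} applies to the second part once you have a sup bound linear in $\|u-v\|_X$. For example, use $|w|\le(-\Delta)^{-1}|f|$ together with $L^2\to L^\infty$ bounds for $n\le 3$.
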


\begin{proof}
Let $a=G(u)$ and $b=G(v)$, and set $d_r:=a_r-b_r$.
The first GS step has the same structure as in Lemma~\ref{lem:1.8}. Therefore
\[
\|d_1\|_{L^2(\Omega)} \le 4K\|u-v\|_X.
\]
Let
\[
c_u := \frac12 u_3^2(u_1^2+a_1^2),\quad c_v := \frac12 v_3^2(v_1^2+b_1^2).
\]
Then $d_2\in H_0^1(\Omega)$ solves
\[
-\Delta d_2 + \frac{c_u}{\varepsilon}d_2 = \frac{c_v-c_u}{\varepsilon}b_2.
\]
By Lemma~\ref{lem:1.4} and $0\le b_2\le M$, we have
\[
\|d_2\|_{L^2(\Omega)} \le \frac{M}{\varepsilon\lambda_D}\|c_v-c_u\|_{L^2(\Omega)}.
\]
Using \eqref{eq:ineq3} repeatedly and $0\le(\cdot)\le M$, yields
\[
\|c_v-c_u\|_{L^2(\Omega)} \le M^3\bigl(3\|u-v\|_X + \|d_1\|_{L^2(\Omega)}\bigr).
\]
Hence
\[
\|d_2\|_{L^2(\Omega)} \le K\bigl(3\|u-v\|_X + \|d_1\|_{L^2(\Omega)}\bigr)
\le K(3+4K)\|u-v\|_X.
\]
Let
\[
e_u := \frac12\bigl((u_1u_2)^2+(a_1a_2)^2\bigr),\qquad
e_v := \frac12\bigl((v_1v_2)^2+(b_1b_2)^2\bigr).
\]
Then $d_3\in H_0^1(\Omega)$ solves
\[
-\Delta d_3 + \frac{e_u}{\varepsilon}d_3 = \frac{e_v-e_u}{\varepsilon}b_3,
\]
so by Lemma~\ref{lem:1.4} and $0\le b_3\le M$, we get
\[
\|d_3\|_{L^2(\Omega)} \le \frac{M}{\varepsilon\lambda_D}\|e_v-e_u\|_{L^2(\Omega)}.
\]
By \eqref{eq:ineq3}, we have
\[
\|e_v-e_u\|_{L^2(\Omega)} \le 2M^3\|u-v\|_X + 2M^3\bigl(\|d_1\|_{L^2(\Omega)}+\|d_2\|_{L^2(\Omega)}\bigr),
\]
hence
\[
\|d_3\|_{L^2(\Omega)} \le K\Bigl(2\|u-v\|_X + 2\|d_1\|_{L^2(\Omega)} + 2\|d_2\|_{L^2(\Omega)}\Bigr)
\le K(2+14K+8K^2)\|u-v\|_X.
\]
Taking the maximum over $r=1,2,3$ yields the Lipschitz constant $L_G$.

Assume one of $(b1)-(b3)$ holds, and let $q_r$ denote the corresponding coefficient in the $r$-th GS step.
Then $b_r$ solves
\[
-\Delta b_r + \frac{q_r}{\varepsilon}b_r=0 \quad\text{in }\Omega,\qquad b_r|_{\partial\Omega}=\phi_r.
\]
If $q_r\ge \delta$ a.e.\ in a neighborhood $U$ of $K_0$, Lemma~\ref{lem:A} yields
\begin{equation}\label{eq:13}
\|b_r\|_{L^\infty(K_0)} \le C_0M\exp\!\Bigl(-c_0\sqrt{\frac{\delta}{\varepsilon}}\Bigr),
\tag{13}
\end{equation}
for constants $C_0,c_0$ depending only on $(\Omega,U,K_0)$.
Combining \eqref{eq:13} with the coefficient-difference bounds from Steps~1--3 gives the stated interior estimates
after absorbing constants into $C,c$.
\end{proof}

\begin{algorithm}[H]
\caption{Monotone Iterative Scheme}
\label{alg:monotone_iterative}
\begin{algorithmic}[1]
\State \textbf{Initialize:} Compute harmonic extensions $u_1^0$, $u_2^0$, $u_3^0$ satisfying the given boundary conditions.
\For{$k = 0, 1, 2, \ldots$ until convergence}
    \State \textbf{Update $u_{1}^{k+1}$, $u_{2}^{k+1}$, $u_{3}^{k+1}$ from linear equations:}
    \begin{subequations}\label{eq:monotone}
    \begin{align}
    \Delta u_{1}^{k+1/2}
    &= \frac{u_{1}^{k+1/2}}{\varepsilon}\,(u_{2}^{k} u_{3}^{k})^{2}, \quad u_{1}^{k+1/2}\big|_{\partial \Omega} = \varphi_{1}
\\[5pt]
    u_{1}^{k+1}&= \alpha u_{1}^{k+1/2} + ( 1- \alpha ) u_{1}^{k }
    \label{eq:mono_u1}\\[4pt]
    \Delta u_{2}^{k+1/2}
    &= \frac{u_{2}^{k+1/2}}{\varepsilon}\,(u_{1}^{k} u_{3}^{k})^{2}, \quad  u_{2}^{k+1/2}\big|_{\partial \Omega} = \varphi_{2} \\
    u_{2}^{k+1}&= \alpha u_{2}^{k+1/2} + ( 1- \alpha ) u_{2}^{k }
    \label{eq:mono_u2}\\[4pt]
    \Delta u_{3}^{k+1/2}
    &= \frac{u_{3}^{k+1/2}}{\varepsilon}\,(u_{1}^{k} u_{2}^{k})^{2}, \quad u_{3}^{k+1/2}\big|_{\partial \Omega} = \varphi_{3} \\
    u_{3}^{k+1}&= \alpha u_{3}^{k+1/2} + ( 1- \alpha ) u_{3}^{k }
    \label{eq:mono_u3}
    \end{align}
    \end{subequations}
    \State \textbf{Check convergence:}
        Stop if $\|u_i^{k+1} - u_i^k\|_{L^2} < \text{tolerance}$ for all $i$.
\EndFor
\end{algorithmic}
\end{algorithm}


\begin{remark}
The following semi-implicit modification of Algorithm~\ref{alg:monotone_iterative}
achieves faster convergence by symmetrizing the coupling terms. 
Only the update equations for $u_i^{k+1}$ are altered as follows:
\begin{subequations}\label{eq:accelerated}
\begin{align}
\Delta u_{1}^{k+1}
&= \frac{u_{1}^{k+1}}{\varepsilon}\,(u_{2}^{k} u_{3}^{k})^{2}, 
\label{eq:accel_u1}\\[4pt]
\Delta u_{2}^{k+1}
&= \frac{u_{2}^{k+1}}{\varepsilon}\,
   \frac{(u_{1}^{k})^{2} + (u_{1}^{k+1})^{2}}{2}\,(u_{3}^{k})^{2},
\label{eq:accel_u2}\\[4pt]
\Delta u_{3}^{k+1}
&= \frac{u_{3}^{k+1}}{\varepsilon}\,
   \frac{(u_{1}^{k} u_{2}^{k})^{2} + (u_{1}^{k+1} u_{2}^{k+1})^{2}}{2}.
\label{eq:accel_u3}
\end{align}
\end{subequations}
The initialization and convergence criteria remain identical to those in Algorithm~\ref{alg:monotone_iterative}.

Another  possible iterative scheme for fixed $\varepsilon$ (with projection onto non-negativity) is:
\begin{equation}\label{eq:phasefield_iter}
-\Delta u_i^{k+1} + \frac{1}{2 \varepsilon} \, u_i^{k+1}\Big(\prod_{j\ne i}u_j^{k}\Big)^{2} =   - \frac{1}{2 \varepsilon}\,u_i^{k}\Big(\prod_{j\ne i}u_j^{k}\Big)^{2}\quad\text{in }\Omega,
\end{equation}
with $u_i^{k+1}=\phi_i$ on $\partial\Omega$ and the pointwise projection $u_i^{k+1}=\max\{0,u_i^{k+1}\}$ to enforce non-negativity.
 
\end{remark}

 \begin{remark}[Implementation details]
Let $u_i \in \mathbb{R}^N$ be the nodal vector of $u_i$ on a uniform grid, with indices
partitioned into interior ($I$) and boundary ($B$) nodes such that
\[
u_i = 
\begin{bmatrix} 
u_{i,I} \\[2pt] g_{i,B} 
\end{bmatrix},
\qquad 
u_{i,B} = g_{i,B}.
\]
Let $K \in \mathbb{R}^{N \times N}$ and $M \in \mathbb{R}^{N \times N}$ denote the
consistent (FE) stiffness and mass matrices or (FD) analogues, both scaled by the
grid spacings $h_x, h_y$. 
Both matrices are symmetric; $M$ is positive definite and $K$ is positive semidefinite.
We use the block partitions
\[
K =
\begin{bmatrix}
K_{II} & K_{IB} \\[2pt]
K_{BI} & K_{BB}
\end{bmatrix},
\qquad
M =
\begin{bmatrix}
M_{II} & M_{IB} \\[2pt]
M_{BI} & M_{BB}
\end{bmatrix}.
\]

For a nonnegative nodal weight $w \in \mathbb{R}^N$, define
\[
W = \mathrm{diag}(w), 
\qquad 
M(w) = W^{1/2} M\, W^{1/2}
= 
\begin{bmatrix}
M_{II}(w) & M_{IB}(w) \\[2pt]
M_{BI}(w) & M_{BB}(w)
\end{bmatrix}.
\]
We introduce the variable weights
\begin{equation}
w_1 = (u_2 u_3)^2,
\qquad 
w_2 = (u_1 u_3)^2,
\qquad 
w_3 = (u_1 u_2)^2.
\end{equation}
Then, the Dirichlet (gradient) and penalty parts are approximated for $i = 1,2,3$ by quadratic forms as
\begin{align*}
\int_\Omega |\nabla u_i|^2\,dx 
&\;\approx\; u_i^\top K\,u_i 
\\[2pt]
&= u_{i,I}^\top K_{II}\,u_{i,I}
  + 2\,g_{i,B}^\top K_{BI}\,u_{i,I}
  + g_{i,B}^\top K_{BB}\,g_{i,B}, \\[6pt]
\int_\Omega (u_1 u_2 u_3)^2\,dx 
&\;\approx\; u_i^\top M(w_i)\,u_i \\[2pt]
&= u_{i,I}^\top M_{II}(w_i)\,u_{i,I}
  + 2\,g_{i,B}^\top M_{BI}(w_i)\,u_{i,I}
  + g_{i,B}^\top M_{BB}(w_i)\,g_{i,B}.
\end{align*}
The $k$-th outer iterations of Algorithm~\ref{alg:monotone_iterative} corresponds to solving the (constrained) full system
\begin{equation}
\bigl(K + \tfrac{1}{\varepsilon} M(w_i^k)\bigr)\,u_i^{k+1} = 0,
\qquad
(u_i^{k+1})_B = g_{i,B}
\end{equation}
for $i = 1,2,3$.
Equivalently, the interior unknowns solve the reduced system
\begin{equation}\label{eq:interior-system}
\bigl(K_{II} + \tfrac{1}{\varepsilon} M_{II}(w_i^k)\bigr)\,u_{i,I}^{k+1}
= -\,K_{IB}\,g_{i,B}
  - \tfrac{1}{\varepsilon}\, M_{IB}(w_i^k)\, g_{i,B}.
\end{equation}
Clearly, harmonic extensions in the initialization of Algorithm~\ref{alg:monotone_iterative} solve the reduced system
\begin{equation}\label{eq:interior-system_harmonic}
\bigl(K_{II} )\,u_{i,I}^{k+1}
= -\,K_{IB}\,g_{i,B}.
\end{equation}
Both reduced linear systems for $i = 1,2,3$ are symmetric and positive definite. Let us recall that for an $N$–point discretization, a multigrid-preconditioned iterative
solution of~\eqref{eq:interior-system} costs $O(N \log N)$, yielding
$O(m N \log N)$ per outer iterations with $m = 3$. 
The number of outer iterations grows like $O(|\log \varepsilon|)$ as $\varepsilon \to 0$.
\end{remark}



\subsection{Projected Gradient Method.}
 Let $\mathbf{v}=(v_1,v_2,v_3)\in L^2(\Omega)^3$ and define the nonnegative parts
$s_i(x)=(v_i(x))^+=\max\{v_i(x),0\}$ for a.e. $x\in\Omega$, $i=1,2,3$.
Let $\mathcal{S}=\{(a_1,a_2,a_3)\in\mathbb{R}^3: a_i\ge 0,\; a_1a_2a_3=0\}$
and write $\mathcal{S}=\mathcal{S}_1\cup\mathcal{S}_2\cup\mathcal{S}_3$, where
$\mathcal{S}_k=\{(a_1,a_2,a_3)\in\mathbb{R}^3: a_i\ge 0,\; a_k=0\}$ for $k=1,2,3$.
For $\mathbf{v}=(v_1,v_2,v_3)\in L^2(\Omega)^3$, define the map
$\mathcal{P}:L^2(\Omega)^3\to L^2(\Omega)^3$ pointwise as follows.
At each $x\in\Omega$, let
\[
k^*(x) \;:=\; \arg\min_{k\in\{1,2,3\}} \bigl(v_k(x)\bigr)^+,
\]
where $(v)^+=\max\{v,0\}$, with ties broken by selecting the smallest index.
Set
\[
\bigl(\mathcal{P}\mathbf{v}\bigr)_i(x)
\;=\;
\begin{cases}
\bigl(v_i(x)\bigr)^+ & \text{if } i\ne k^*(x),\\[4pt]
0 & \text{if } i = k^*(x).
\end{cases}
\]
For boundary points $x \in \partial \Omega$, we simply set
\[
s_i(x) = \varphi_i(x), \;\; i=1,2,3.
\]
The following lemma shows why $\mathcal{P}$ is indeed a projection.
\begin{lemma}\label{lem:projection_corrected}
Let  $\mathcal{P}\mathbf{v}(x)$ be defined as above. Then $\mathcal{P}\mathbf{v}(x)\in\mathcal{S}$ for a.e.\ $x\in\Omega$, and
$\mathcal{P}\mathbf{v}$ is the $L^2(\Omega)^3$-metric projection of $\mathbf{v}$
onto the admissible set, i.e.,
\[
\|\mathbf{v}-\mathcal{P}\mathbf{v}\|_{L^2(\Omega)^3}^2
\;=\;
\min\bigl\{\|\mathbf{v}-\mathbf{w}\|_{L^2(\Omega)^3}^2
: \mathbf{w}\in L^2(\Omega)^3,\; \mathbf{w}(x)\in\mathcal{S}\ \text{a.e.}\bigr\}.
\]
\end{lemma}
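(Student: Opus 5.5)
The plan is to reduce the $L^2$ projection to a pointwise finite-dimensional projection onto $\mathcal{S}\subset\mathbb{R}^3$, solve that explicitly, and then integrate. Feasibility is immediate. At a.e.\ $x$, every component of $\mathcal{P}\mathbf{v}(x)$ is a positive part, hence nonnegative, and the component $k^*(x)$ is set to zero. So the product vanishes and $\mathcal{P}\mathbf{v}(x)\in\mathcal{S}$. Measurability of $\mathcal{P}\mathbf{v}$ comes from writing it as a finite sum $\sum_k \mathbf{1}_{\{k^*=k\}}(\cdot)$ of measurable pieces. The sets $\{k^*=k\}$ are measurable because they are defined by finitely many inequalities between the measurable functions $v_i^+$, with the tie-breaking rule built in. Also $|(\mathcal{P}\mathbf{v})_i|\le|v_i|$, so $\mathcal{P}\mathbf{v}\in L^2(\Omega)^3$.

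The key step is the pointwise claim: for fixed $a\in\mathbb{R}^3$, the vector $p(a)$ (with components $a_i^+$ for $i\neq k^*$ and $0$ at $k^*=\arg\min_k a_k^+$) minimizes $|a-b|^2$ over $b\in\mathcal{S}$. I would argue via the decomposition $\mathcal{S}=\mathcal{S}_1\cup\mathcal{S}_2\cup\mathcal{S}_3$. Each $\mathcal{S}_k$ is a closed convex cone, namely a product of half-lines and $\{0\}$. Its Euclidean projection is therefore coordinatewise: $a_i^+$ for $i\ne k$ and $0$ for $i=k$. The squared distance is
\[
d_k(a)^2=\sum_{i}(a_i^-)^2+(a_k^+)^2 ,
\]
using $a_i-a_i^+=-a_i^-$ and $a_k^2=(a_k^+)^2+(a_k^-)^2$. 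The distance to a finite union is the minimum of the distances to its members. Since the first sum does not depend on $k$, the minimum is attained exactly at $k$ minimizing $a_k^+$, which is $k^*$. Hence $|a-p(a)|\le|a-b|$ for all $b\in\mathcal{S}$, with ties giving equal distances, so the tie-break is harmless.

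Finally, take any admissible $\mathbf{w}$ with $\mathbf{w}(x)\in\mathcal{S}$ a.e. Applying the pointwise inequality with $a=\mathbf{v}(x)$, $b=\mathbf{w}(x)$ and integrating over $\Omega$ gives $\|\mathbf{v}-\mathcal{P}\mathbf{v}\|^2\le\|\mathbf{v}-\mathbf{w}\|^2$. Since $\mathcal{P}\mathbf{v}$ is itself admissible, the minimum is attained and equals the left side. The boundary convention $s_i=\varphi_i$ on $\partial\Omega$ concerns a null set and does not affect the $L^2$ statement; I would remark on this.

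The main obstacle is mild: checking the measurable selection and the tie-breaking, and computing the distance to $\mathcal{S}_k$ correctly when some $a_i<0$. That computation is where the formula $\sum_i(a_i^-)^2+(a_k^+)^2$ must be verified carefully. The projection is not unique at ties, and the lemma only asserts attainment of the minimum, which is consistent with that.
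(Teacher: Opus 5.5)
Your proposal is correct and follows the paper's proof essentially step for step. Both arguments decouple the $L^2$ norm pointwise, project coordinatewise onto each face $\mathcal{S}_k$ with squared distance $(a_k^+)^2+\sum_i (a_i^-)^2$, select $k^*$ because the negative-part sum is common to all faces, and then integrate and check measurability of the tie-breaking rule. Your extra remarks are small additions the paper leaves implicit: the bound $|(\mathcal{P}\mathbf{v})_i|\le|v_i|$, which gives $\mathcal{P}\mathbf{v}\in L^2(\Omega)^3$ directly, and the observation that the boundary convention only affects a null set.
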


\begin{proof}
Since the $L^2(\Omega)^3$-norm decouples as an integral of pointwise
$\ell^2$-norms,
\[
\|\mathbf{v}-\mathbf{w}\|_{L^2(\Omega)^3}^2
\;=\;
\int_\Omega \|\mathbf{v}(x)-\mathbf{w}(x)\|_{\ell^2}^2\,dx,
\]
the global minimization reduces to solving, independently at each $x\in\Omega$,
the finite-dimensional problem
\begin{equation}\label{eq:pointwise_proj}
\min_{\mathbf{y}\in\mathcal{S}}\|\mathbf{v}(x)-\mathbf{y}\|_{\ell^2}^2.
\end{equation}
It therefore suffices to solve \eqref{eq:pointwise_proj} for a fixed $x$ and
verify that $\mathcal{P}\mathbf{v}(x)$ is optimal.
Each face $\mathcal{S}_k=\{\mathbf{y}\in\mathbb{R}^3: y_i\ge 0,\; y_k=0\}$ is a
closed convex cone, so its Euclidean projection is given explicitly by
$\pi_k(\mathbf{v}(x))=\bigl((v_1(x))^+,\,(v_2(x))^+,\,(v_3(x))^+\bigr)$ with
the $k$-th entry replaced by~$0,$ i.e.
\[
\pi_k(\mathbf{v}(x))_i \;=\;
\begin{cases}
0 & \text{if } i=k,\\[3pt]
\max\{v_i(x),\,0\} = (v_i(x))^+ & \text{if } i\ne k.
\end{cases}
\]
The corresponding squared distance will be
\begin{multline}\label{eq:dist_face}
d_k^2(x):=
\|\mathbf{v}(x)-\pi_k(\mathbf{v}(x))\|_{\ell^2}^2
= \sum_{i=1}^{3}|v_i(x)-\pi_k(\mathbf{v}(x))_i|^2= \\
=
{|v_k(x)-0|^2}_{}
\;+\;
\sum_{i\ne k}
{|v_i(x)-(v_i(x))^+|^2}= \bigl((v_k(x))^+\bigr)^2
+
\sum_{i=1}^{3}\bigl((v_i(x))^-\bigr)^2.
\end{multline}
where $(v)^-=\max\{-v,0\}$. 


Since $\mathcal{S}=\mathcal{S}_1\cup\mathcal{S}_2\cup\mathcal{S}_3$ and each face
is closed and convex, the projection onto the union is obtained by comparing the
three candidates:
\[
\min_{\mathbf{y}\in\mathcal{S}}\|\mathbf{v}(x)-\mathbf{y}\|_{\ell^2}^2
\;=\;
\min_{k\in\{1,2,3\}} d_k^2(x).
\]
Writing $N(x):=\sum_{i=1}^3((v_i(x))^-)^2$, expression \eqref{eq:dist_face}
becomes $d_k^2(x) = ((v_k(x))^+)^2 + N(x)$.  Because $N(x)$ is common to all
three faces, minimizing $d_k^2(x)$ over $k$ is equivalent to minimizing
$((v_k(x))^+)^2$.  The optimal face is therefore
\[
k^*(x)
\;=\;
\arg\min_{k\in\{1,2,3\}}\bigl((v_k(x))^+\bigr)^2
\;=\;
\arg\min_{k\in\{1,2,3\}}\bigl(v_k(x)\bigr)^+,
\]
and the projection onto $\mathcal{S}$ at $x$ is
$\pi_{k^*(x)}(\mathbf{v}(x))$, which sets the $k^*$-th component to zero and
truncates the remaining two to their positive parts.  This is precisely the map
$\mathcal{P}\mathbf{v}(x)$.

By construction, $(\mathcal{P}\mathbf{v}(x))_{k^*}=0$ and
$(\mathcal{P}\mathbf{v}(x))_i\ge 0$ for $i\ne k^*$, so
$\mathcal{P}\mathbf{v}(x)\in\mathcal{S}_{k^*}\subset\mathcal{S}$ for a.e.\ $x$.
Since $\mathcal{P}\mathbf{v}(x)$ solves the pointwise problem
\eqref{eq:pointwise_proj} at a.e.\ $x$, integrating over $\Omega$ yields
\[
\|\mathbf{v}-\mathcal{P}\mathbf{v}\|_{L^2(\Omega)^3}^2
\;\le\;
\|\mathbf{v}-\mathbf{w}\|_{L^2(\Omega)^3}^2
\]
for every $\mathbf{w}\in L^2(\Omega)^3$ with $\mathbf{w}(x)\in\mathcal{S}$ a.e.
Finally, the index map $x\mapsto k^*(x)$ is measurable because each $(v_k)^+$ is
measurable and the tie-breaking rule selects $k^*$ as a measurable function of
$((v_1)^+,(v_2)^+,(v_3)^+)$, whence
$\mathcal{P}\mathbf{v}\in L^2(\Omega)^3$.
\end{proof}

\begin{algorithm} 
\caption{Projected Gradient Method}
\label{alg:projected_gradient}
\begin{algorithmic}[1]
\State \textbf{Initialize:} $(u_1^0, u_2^0, u_3^0) \in \mathcal{S}$ (e.g., harmonic 
extensions)
\State Choose step size $\alpha > 0$ (e.g., $\alpha = 0.1 h^2$ for grid spacing $h$)
\For{$k = 0, 1, 2, \ldots$ until convergence}
    \For{$i = 1, 2, 3$}
        \State Compute $g_i^k = -  A  u_i^k$ ( $A$  discrete negative Laplacian)
        \State Gradient step: $\tilde{u}_i^{k+1} = u_i^k - \alpha g_i^k$
    \EndFor
    \State Project: $(u_1^{k+1}, u_2^{k+1}, u_3^{k+1}) = \mathcal{P}(\tilde{u}_1^{k+1}, 
    \tilde{u}_2^{k+1}, \tilde{u}_3^{k+1})$
    \State Check: if $\max_i \|u_i^{k+1} - u_i^k\|_{L^2} < \varepsilon$ then \textbf{break}
\EndFor
\State \Return $(u_1^{k+1}, u_2^{k+1}, u_3^{k+1})$
\end{algorithmic}
\end{algorithm}

The projection $\mathcal{P}$ operates pointwise: at each $x \in \Omega$, we solve
\[
\min_{y \in S} \|v(x) - y\|_{\ell^2}^2, \quad S = \{(a_1, a_2, a_3) : a_i \geq 0, 
\, a_1 a_2 a_3 = 0\}.
\]
Since $S = S_1 \cup S_2 \cup S_3$ where $S_k = \{a_k = 0, \, a_i \geq 0\}$ are 
convex, the minimizer is found by comparing three Euclidean projections onto cones. 
 The projection $\mathcal{P}$ is uniquely defined except on a set of measure zero 
(where two or more $v_i^+$ coincide). The tie-breaking rule makes $\mathcal{P}$ 
measurable.
 \begin{remark}
For stability, choose $\alpha < \lambda_{\max}(A)^{-1}$ where $\lambda_{\max}(A)$ 
is the largest eigenvalue of the stiffness matrix. For a uniform grid with spacing 
$h$, $\lambda_{\max}(A) \sim h^{-2}$, so $\alpha \lesssim h^2$ ensures stability.
\end{remark}

For faster convergence, use Nesterov acceleration (FISTA):
\begin{algorithm}[H]
\caption{FISTA for Projected Gradient Method}
\label{alg:fista}
\begin{algorithmic}[1]
\Require Same as Algorithm~\ref{alg:projected_gradient}
\Ensure Solution $(u_1, u_2, u_3) \in \mathcal{S}$

\State Initialize $(u_1^0, u_2^0, u_3^0)$ as in Algorithm~\ref{alg:projected_gradient}
\State Set $(y_1^0, y_2^0, y_3^0) \gets (u_1^0, u_2^0, u_3^0)$ and $t_0 \gets 1$

\For{$k = 0, 1, 2, \ldots, K_{\max}-1$}
    \State \textbf{Step 1: Gradient at momentum point}
    \For{$i = 1, 2, 3$}
        \State $g_i^k \gets -\Delta y_i^k$
    \EndFor
    
    \State \textbf{Step 2: Gradient step and projection}
    \For{$i = 1, 2, 3$}
        \State $\tilde{u}_i^{k+1} \gets y_i^k - \alpha \cdot g_i^k$
    \EndFor
    \State $(u_1^{k+1}, u_2^{k+1}, u_3^{k+1}) \gets \mathcal{P}((\tilde{u}_1^{k+1}, \tilde{u}_2^{k+1}, \tilde{u}_3^{k+1}))$
    
    \State \textbf{Step 3: Nesterov momentum update}
    \State $t_{k+1} \gets \frac{1 + \sqrt{1 + 4t_k^2}}{2}$
    \State $\beta_k \gets \frac{t_k - 1}{t_{k+1}}$
    \For{$i = 1, 2, 3$}
        \State $y_i^{k+1} \gets u_i^{k+1} + \beta_k(u_i^{k+1} - u_i^k)$ \Comment{Momentum extrapolation}
    \EndFor
    
    \State \textbf{Step 4: Check convergence}
    \If{$\max_i \|u_i^{k+1} - u_i^k\|_{L^2} < \varepsilon$}
        \State \textbf{break}
    \EndIf
\EndFor

\State \Return $(u_1^{k+1}, u_2^{k+1}, u_3^{k+1})$
\end{algorithmic}
\end{algorithm}

The projection operator $\mathcal{P}$ defined in Lemma~\ref{lem:projection_corrected} can be interpreted as the proximal operator of the indicator function $I_S$:
\[
\mathcal{P}(v) = \text{prox}_{\tau I_S}(v) = \arg\min_{w \in S} \|w - v\|_{L^2(\Omega)}^2.
\]
This proximal splitting perspective provides a unifying theoretical framework.
Algorithm~\ref{alg:projected_gradient} is forward-backward splitting.
Algorithm~\ref{alg:fista} is accelerated forward-backward splitting.  
The projection decouples pointwise, making the proximal operator explicitly computable

This interpretation connects our methods to the broader literature on proximal algorithms for nonconvex problems.

\begin{remark}
The set $S$ is nonconvex, and the Dirichlet energy restricted to $S$ may admit multiple stationary points. Consequently, projected (and accelerated) gradient iterations are not guaranteed to converge to a global minimizer; they should be interpreted as heuristics seeking
low-energy segregated configurations, potentially dependent on initialization and parameter choices.    
\end{remark}

\section{Simulations}
\begin{example}\label{ex:numerical_square}
Let $\Omega=[-1,1]\times[-1,1]$. We compare the penalization method and the projected
gradient method on a uniform Cartesian grid with spacing $h=\Delta x=\Delta y=0.005$. For the penalization scheme, we take $\varepsilon=10^{-9}$.  The Dirichlet boundary data
$\phi_i$ on $\partial\Omega$ are prescribed by
\[
\phi_1 = \chi_{\{y<0\}}\,|y|,\, \phi_2 = \chi_{\{y>0\}}\,|y|,\, \phi_3 = 0.25.
\]
In Figures~\ref{fig:surfaces}--\ref{fig:u3_contours} we report the penalized solution
$u_i^\varepsilon$ by surface plots for $u_1^\varepsilon,u_2^\varepsilon,u_3^\varepsilon$,
together with a contour plot highlighting the segregation interfaces.

\begin{figure}[h]
\centering
\includegraphics[scale=0.45]{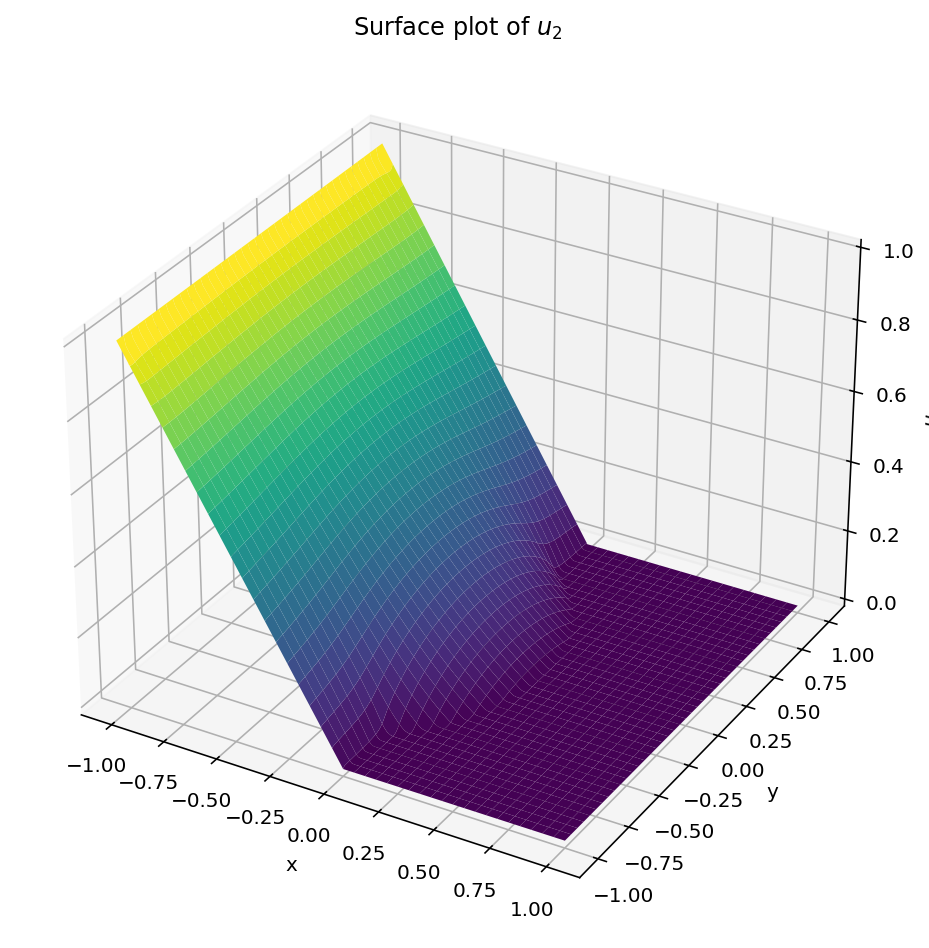}\hfill
\includegraphics[scale=0.45]{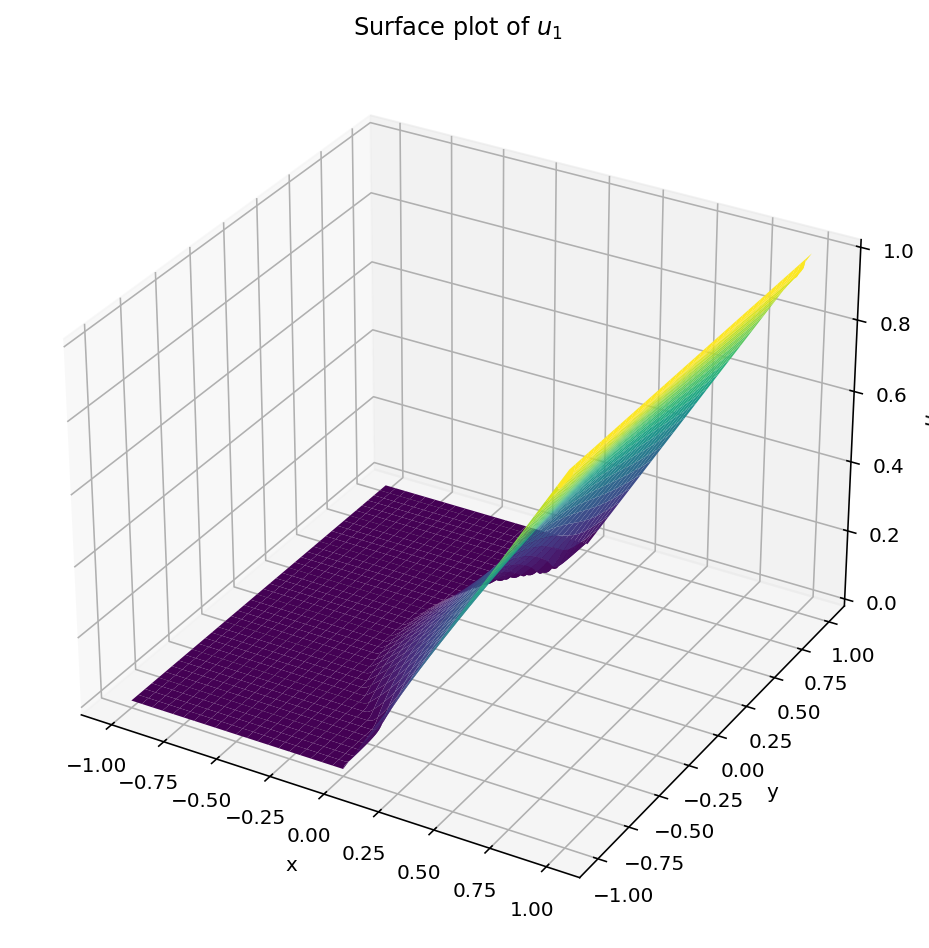}
\caption{Surface plots of $u_1^\varepsilon$ (left) and $u_2^\varepsilon$ (right).}
\label{fig:surfaces}
\end{figure}

\begin{figure}[h]
\centering
\includegraphics[scale=0.33]{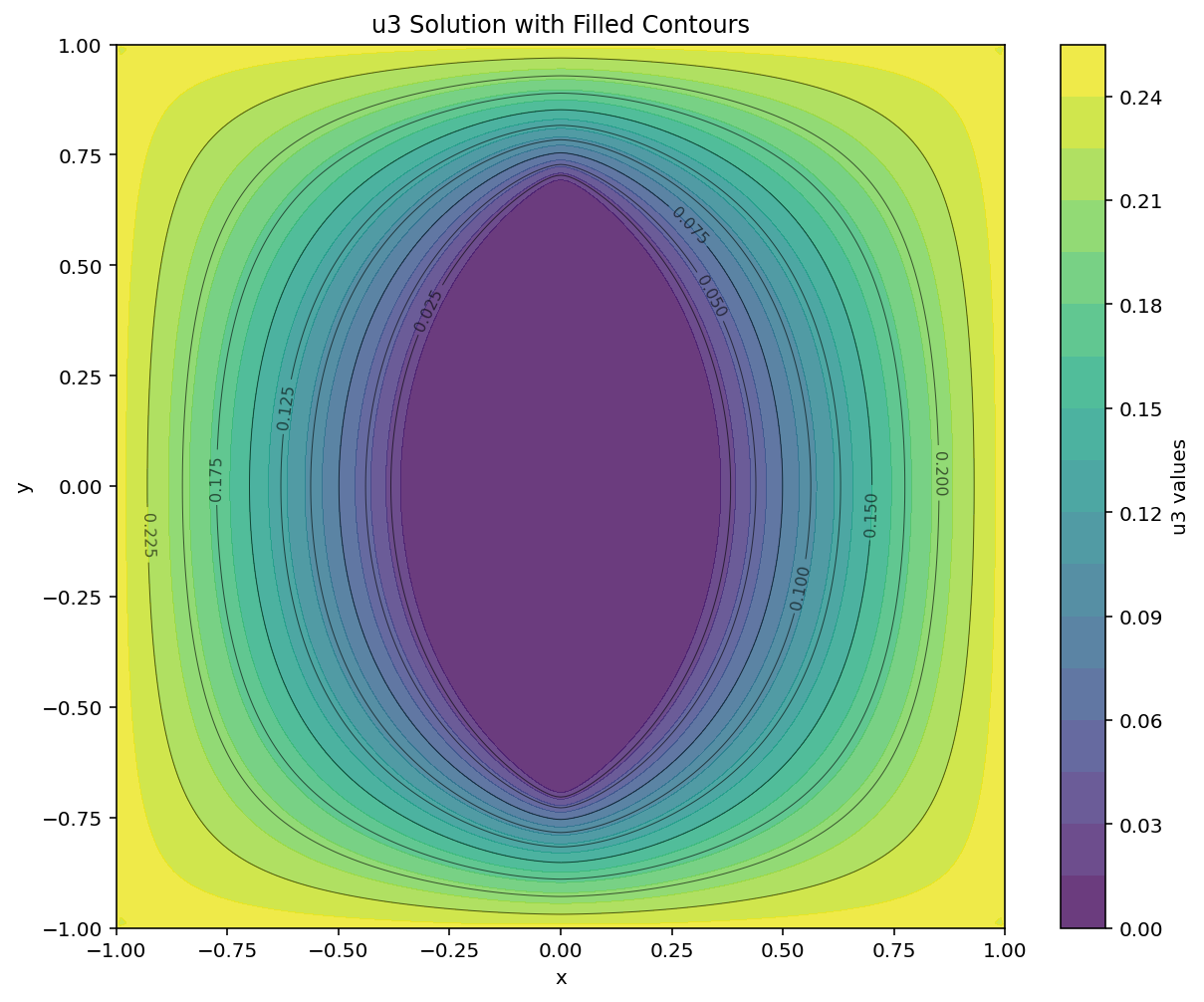}\hfill
\includegraphics[scale=0.33]{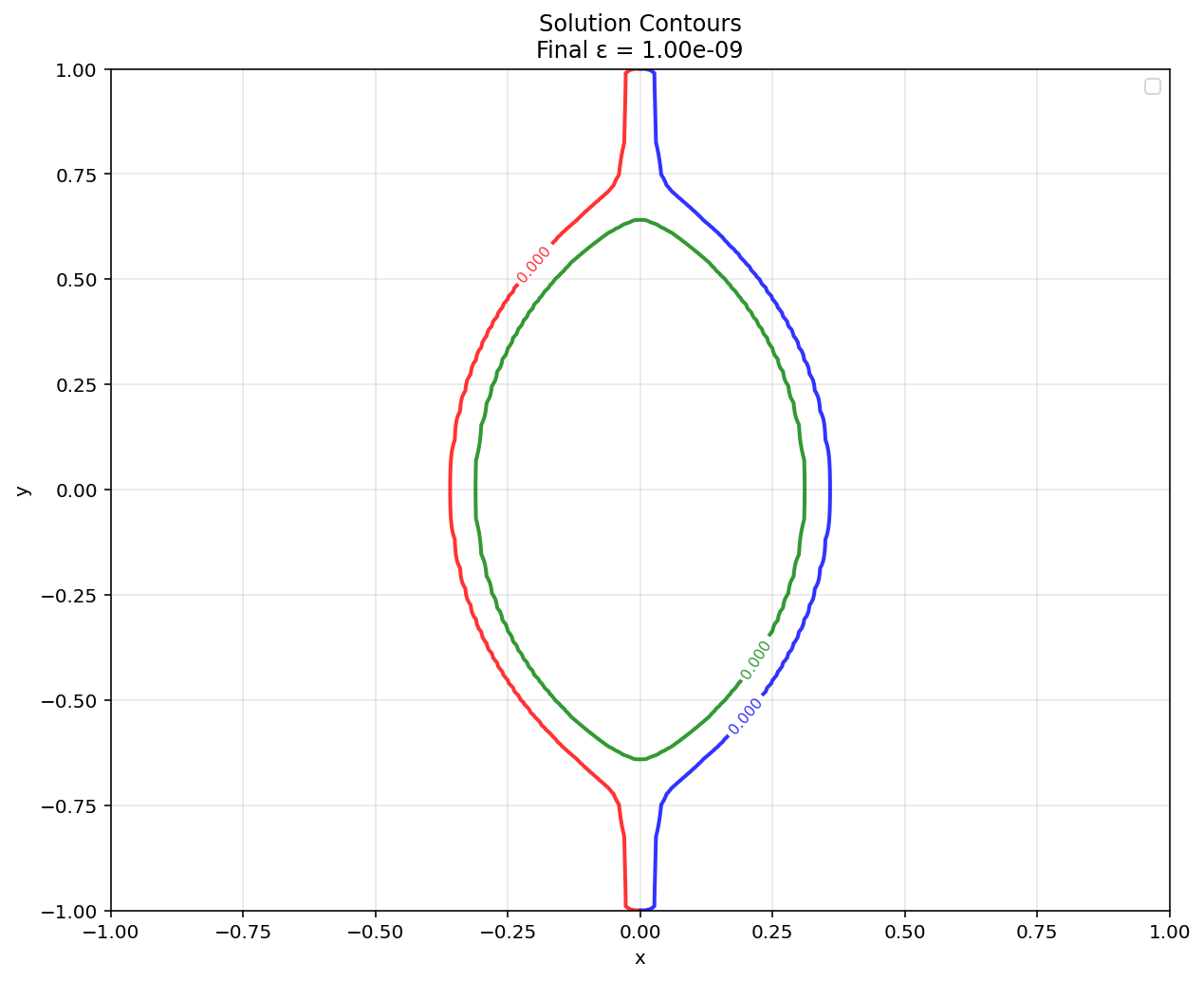}
\caption{Surface plot of $u_3^\varepsilon$ (left) and a $\varepsilon$ contour(right) with penalization.}
\label{fig:u3_contours}
\end{figure}
Figure~\ref{fig:3} displays the output of the projected gradient method for the
boundary data of Example~4.1.  The top row shows filled contour plots of the three
components $u_1$, $u_2$, and $u_3$ on $\Omega = [-1,1]^2$, with the dashed rectangle
indicating the inner square $[-0.3,\,0.3]^2$.  The contours of $u_1$ and $u_2$ confirm
that these two components partition the domain into upper and lower half-regions,
consistent with the imposed Dirichlet data $\phi_1 = \chi_{\{y<0\}}|y|$ and
$\phi_2 = \chi_{\{y>0\}}|y|$, while the third component $u_3$ concentrates near the
lateral boundary $\{x = \pm 1\}$ and decays rapidly toward the interior.  Inside the
inner square, $u_3$ is effectively zero (mean value $\approx 0.0482$, compared with the
boundary prescription $\phi_3 = 0.25$), indicating that the segregation constraint
$u_1 u_2 u_3 = 0$ forces $u_3$ to vanish in the region where $u_1$ and $u_2$ are both
appreciably positive.  The bottom-left panel records the evolution of the discrete
Dirichlet energy $E_h(u^k)$ over the iteration history; the energy decreases
monotonically over approximately 3500 iterations, stabilizing at a value consistent
with the penalization result for the same boundary configuration.  The bottom-center
panel tracks the maximum pointwise constraint violation
$\max_{x \in \Omega_h} |u_1(x)\,u_2(x)\,u_3(x)|$, which drops below $10^{-10}$ within
the first few hundred iterations and remains at machine-precision level thereafter,
confirming that the projection step enforces the segregation constraint to full
numerical accuracy at every iteration.  Finally, the bottom-right panel compares the
mean value of $u_3$ in three subregions: the boundary layer (where $\phi_3 = 0.25$),
the inner square (where $u_3 \approx 0$), and the outer annular region.  The sharp
contrast between the boundary value $0.25$ and the interior mean $0.0482$ quantifies
the spatial extent of the segregation: $u_3$ is driven to zero wherever $u_1$ and $u_2$
coexist, and retains significant values only in a narrow layer adjacent to $\partial\Omega$
where the competing components are small.
\begin{figure}[h]
\centering
\includegraphics[scale=0.33]{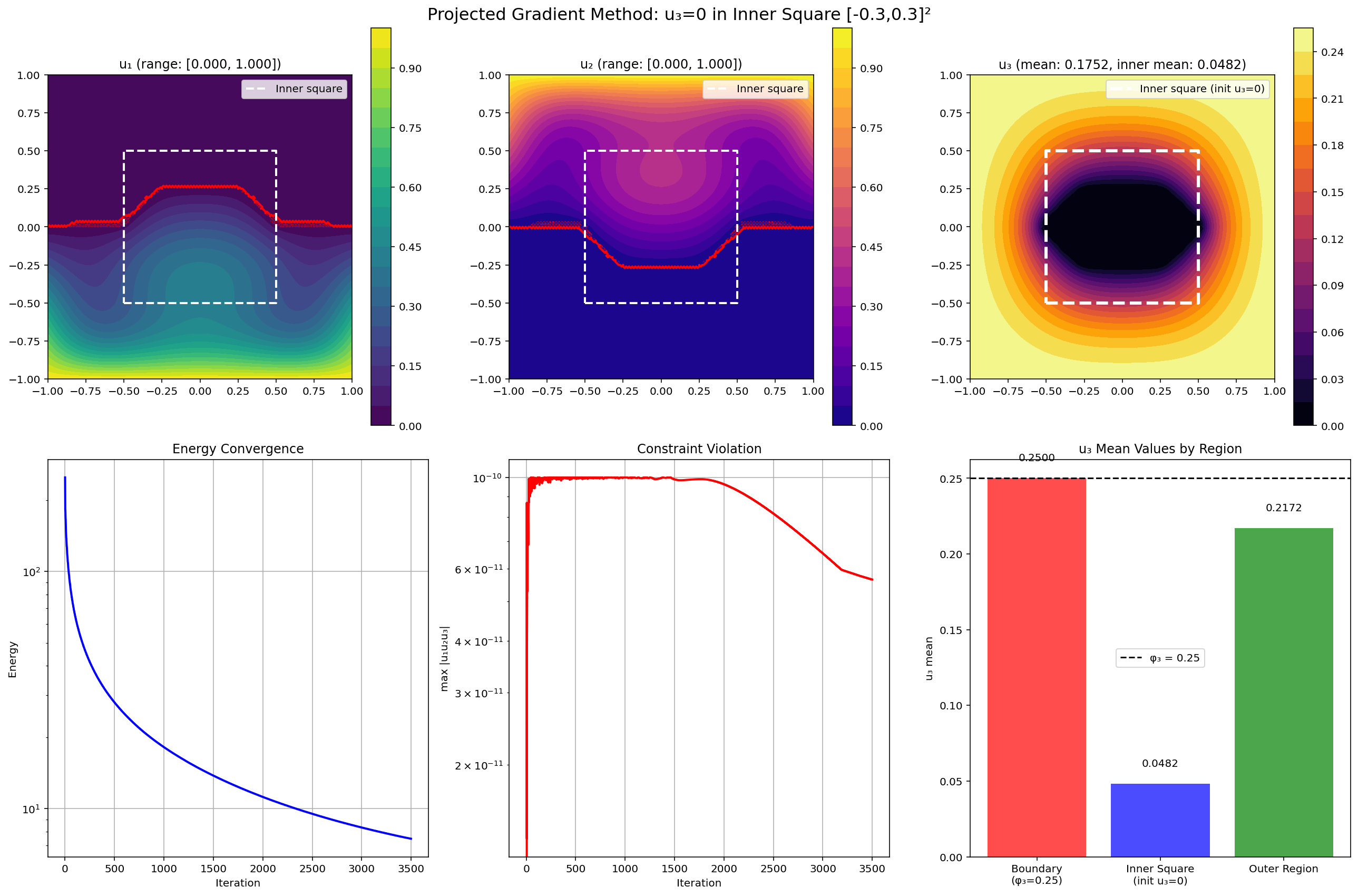}
\caption{Projected gradient method for Example~4.1 on $\Omega = [-1,1]^2$.
Top row: filled contour plots of $u_1$, $u_2$, and $u_3$; the dashed rectangle
marks the inner square $[-0.3,\,0.3]^2$. Bottom row: energy convergence history
(left), maximum constraint violation $\max|u_1 u_2 u_3|$ (center), and mean values
of $u_3$ by region (right).}
\label{fig:3}
\end{figure}

\end{example}

\begin{example}\label{ex:fista_square_9bc}
We illustrate the performance of the accelerated projected gradient method (FISTA) on the square
domain $\Omega=[-1,1]^2$ discretized by a uniform Cartesian grid with mesh size
$h=\Delta x=\Delta y$. The interface set is visualized as the union of the boundaries of the superlevel sets
$\{u_i>\delta\}$ (for a fixed small $\delta>0$). 

\paragraph{Boundary data.}
We prescribe nine qualitatively distinct boundary configurations $\{\phi_i\}_{i=1}^3$ on
$\partial\Omega$ as listed in Table~\ref{tab:bc}. These configurations were introduced
in~\cite{Bidentical} and provide a standard benchmark for three-phase segregation on a square.

\begin{table}[h]
\centering
\caption{Boundary conditions on $\Omega = [-1,1]^2$, with $\theta = \arctan(y/x)$.}
\label{tab:bc}
\renewcommand{\arraystretch}{1.3}
\begin{tabular}{cl}
\hline
BC & Definition on $\partial\Omega$ \\
\hline
1 & $\phi_i = \max(0, \cos(\theta - 2\pi i/3))$, \quad $i = 1,2,3$ \\
2 & $\phi_i = \max(0, \cos(\theta - 2\pi i/3 - \pi/4))$, \quad $i = 1,2,3$ \\
3 & $\phi_1|_{y=-1} = 1$; \; $\phi_2|_{y=1} = 1$; \; $\phi_3|_{x=\pm 1} = 0.5$ \\
4 & $\phi_1 = x^+$; \; $\phi_2 = (-x)^+$; \; $\phi_3 = 0.25$ \\
5 & $\phi_1|_{y=\pm 1} = 1$; \; $\phi_2|_{x=\pm 1} = 1$; \; $\phi_3 = 0.3$ \\
6 & $\phi_i = \max(0, 1 - |z - c_i|/2)$; \; $c_1=(-1,-1)$, $c_2=(1,1)$, $c_3=(1,-1)$ \\
7 & $\phi_1|_{y=\pm 1} = \sin\frac{\pi(x+1)}{2}$; \;   $\phi_2|_{y=\pm 1} = \left(\cos \frac{\pi(x+1)}{2}\right)^+$;   \; $\phi_3|_{x=\pm 1} = 0.3$ \\
8 & $\phi_1|_{y=-1, x<0} = 1$; \; $\phi_2|_{y=-1, x>0} = 1$; \; $\phi_3|_{y=1} = 1$ \\
9 & $\phi_1|_{y=-1 \cup x=-1} = 1$; \; $\phi_2|_{y=1 \cup x=1} = 1$; \; $\phi_3 = 0.2$ \\
\hline
\end{tabular}
\end{table}

\paragraph{Initialization.}
As an initial guess $u^0=(u_1^0,u_2^0,u_3^0)$ we employ the harmonic extension of the boundary data:
for each $i\in\{1,2,3\}$ we solve the discrete Laplace problem
\[
\Delta_h u_i^0=0 \quad \text{in }\Omega_h,\qquad u_i^0=\phi_i \quad \text{on }\partial\Omega_h,
\]
where $\Omega_h$ denotes the set of interior grid points and $\Delta_h$ is the standard five-point
Laplacian. The resulting triple is then projected pointwise onto $S$ to enforce nonnegativity and
hard segregation. 

\paragraph{FISTA iteration with projection.}
Let $u^k\in S$ denote the current iterate and define an extrapolated point
\[
y^k=u^k+\beta_k\,(u^k-u^{k-1}),\qquad
\beta_k=\frac{t_{k-1}-1}{t_k},\qquad
t_k=\frac{1+\sqrt{1+4t_{k-1}^2}}{2},\quad t_0=1,
\]
with the convention $u^{-1}=u^0$. Given a step size $\alpha>0$, one FISTA step takes the form
\[
\tilde u^{k+1}=y^k+\alpha\,\Delta_h y^k,\qquad
u^{k+1}=P_S(\tilde u^{k+1}),
\]
where $P_S$ denotes the pointwise projection onto $S$ obtained by truncating negative values and
setting, at each grid point, the smallest component to zero. Dirichlet data are enforced by
resetting $u^{k+1}=\phi$ on $\partial\Omega_h$ after the projection.

Since the projection onto $S$ is nonconvex, we incorporate standard safeguards to improve
robustness across all boundary configurations: (i) a small ``hysteresis'' tolerance is used in the
projection to avoid spurious phase switching at near-ties; (ii) a proximal bias may be applied
prior to projection via $\tilde u^{k+1}\leftarrow(\tilde u^{k+1}+\eta u^k)/(1+\eta)$ with
$\eta\ge 0$; and (iii) $\alpha$ is selected by backtracking to ensure a monotone decrease of a
Dirichlet-energy surrogate,
\[
\mathcal{E}_h(u)=\frac12\sum_{i=1}^3\sum_{x\in\Omega_h}\bigl|\nabla_h u_i(x)\bigr|^2\,h^2,
\]
where $\nabla_h$ is the standard finite-difference gradient. If the energy increases, we perform a FISTA restart by resetting $t_k=1$ and $y^k=u^k$.

\paragraph{Parameters and stopping criterion.}
We use a uniform grid with $h=\Delta x=\Delta y=0.01$ (i.e., $n=201$ points per coordinate direction).
The initial step size is chosen as $\alpha_0 = 0.03\,h^2 = 3\times 10^{-6}$ with a backtracking lower bound
$\alpha_{\min}=10^{-5}h^2=10^{-9}$ and shrink factor $\rho=0.5$.
In addition, we use a proximal bias parameter $\eta=0.2$ and a hysteresis tolerance $\tau=10^{-10}$ in the
pointwise projection to stabilize phase selection near ties.

Figure~\ref{fig:4} shows the corresponding numerical partition obtained by the projected
gradient scheme under the same boundary data.

In both approaches, the computed solutions exhibit a clearly segregated configuration with sharp
interfaces separating the active regions of the three components. In particular, the contour plots
indicate that $u_3$ concentrates near $\partial\Omega$, forming a boundary layer around the perimeter,
while $u_1$ and $u_2$ dominate the lower and upper portions of the domain, respectively, in accordance
with the imposed Dirichlet data. For the sake of comparison 
in Figure~\ref{fig:5}, we employ the penalization method for the same nine  distinct boundary configurations listed in Table~\ref{tab:bc}.

\begin{figure}[h]
\centering
\includegraphics[scale=0.48]{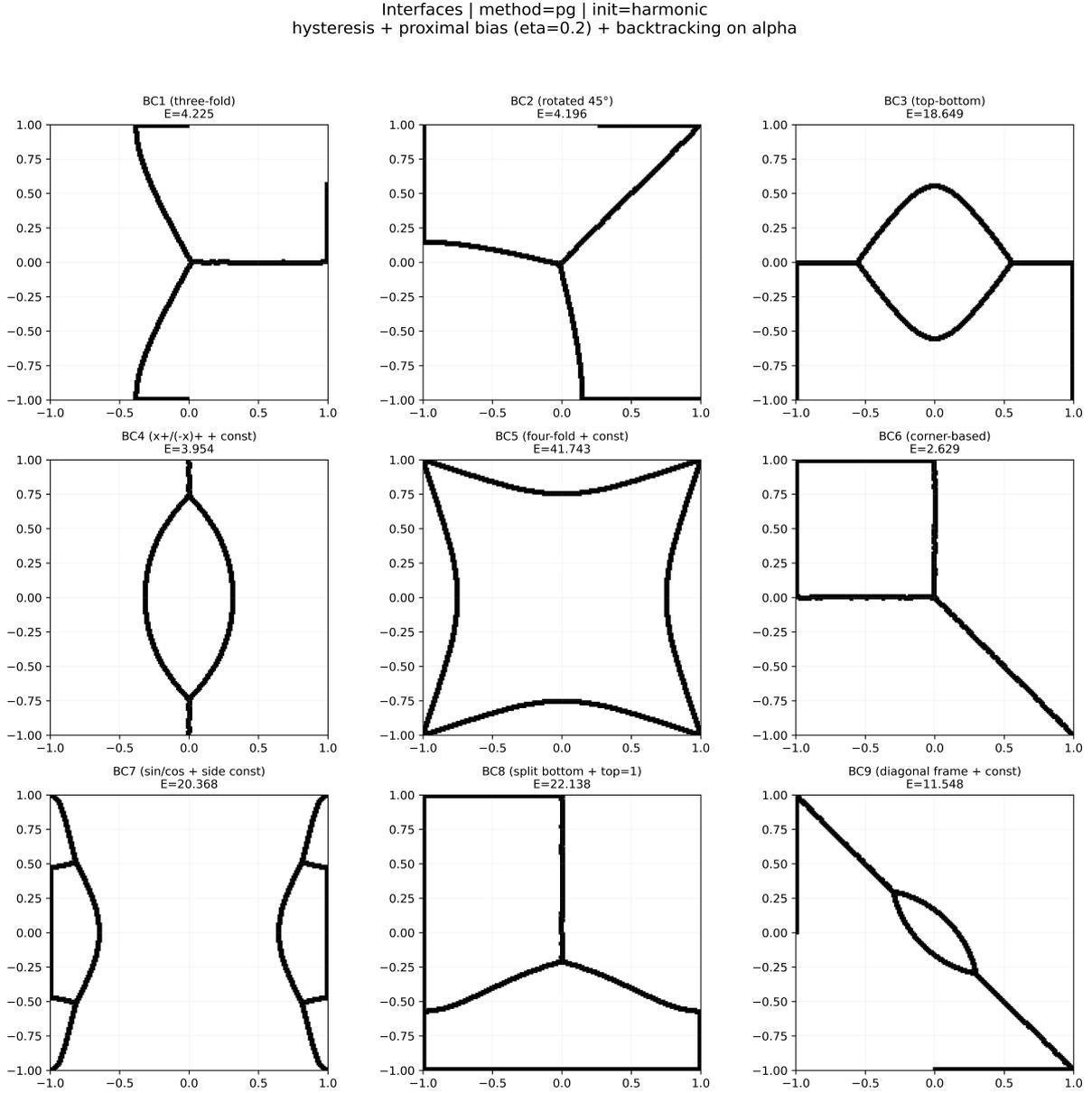}
\caption{Gradient projected method on  $\Omega=[-1,1]^2$.}
\label{fig:4}
\end{figure}

Figure~\ref{fig:5} reports the segregation interfaces produced by
Algorithm~\ref{alg:monotone_iterative} for the nine boundary configurations listed in Table~\ref{tab:bc}, 
computed on the uniform Cartesian grid with mesh size $h = \Delta x = \Delta y = 0.004$ and penalization parameter $\varepsilon = 10^{-9}$.  The free boundary $\Gamma_i := \partial\{u_i > 0\}$
of each component is visualized as the level curve
\begin{equation}\label{eq:threshold}
    \Gamma_i^{\delta} := \bigl\{ x \in \Omega : u_i^{\varepsilon}(x) = \delta \bigr\},
    \qquad \delta = \sqrt{\varepsilon} = 10^{-5},
\end{equation}
and the three curves $\Gamma_1^{\delta}$, $\Gamma_2^{\delta}$, $\Gamma_3^{\delta}$ are
superimposed in red, blue, and green, respectively. The choice $\delta = \sqrt{\varepsilon}$ is motivated by the decay rate of the penalized
solution near the free boundary.

Across all nine boundary configurations, the computed interfaces are sharp, free of oscillations, and agree visually with those obtained by projected gradient and the direct construction method ~\cite{Bidentical}.

\begin{figure}[!htbp]
\centering
\includegraphics[scale=0.4]{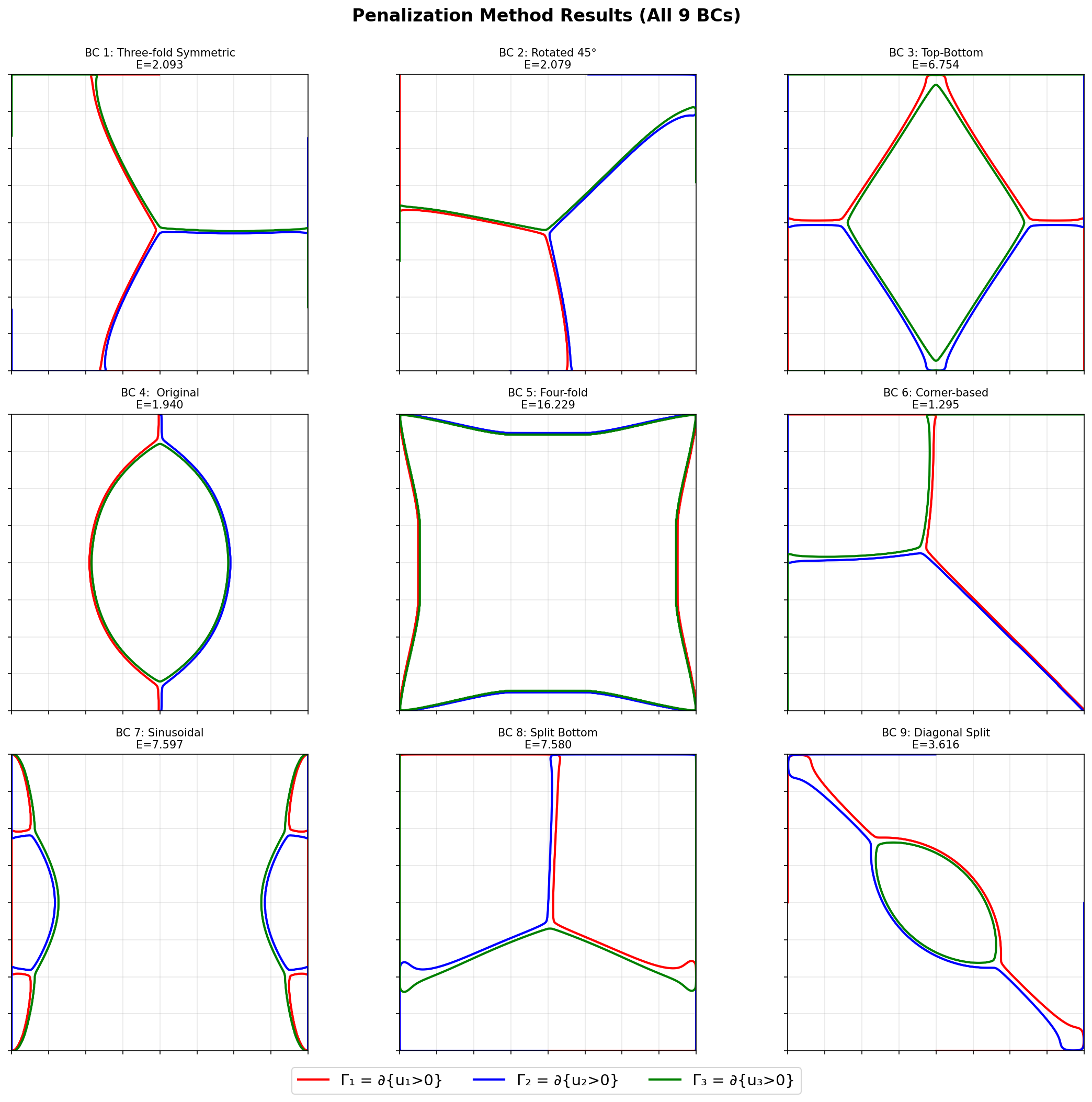}
\caption{Penalization Method: segregation interfaces on $\Omega=[-1,1]^2$.}
\label{fig:5}
\end{figure}

\end{example}

\section{Conclusion and Impact}

 We investigated numerical algorithms for elliptic systems under partial segregation constraints,
motivated by the need to compute segregated multi-phase configurations when classical constraint
qualifications fail. Two complementary strategies were developed. The first is strong competition
penalization combined with damped Gauss--Seidel/Picard iterations and $\varepsilon$-continuation,
yielding robust segregated patterns as $\varepsilon\to 0$. The second is a projected gradient
framework (with an accelerated variant, in the spirit of FISTA) relying on an explicit pointwise
projection in the three-phase setting, which provides a simple and scalable alternative when coupled
elliptic solves become expensive.

The main limitation is nonconvexity: both the feasible set and (depending on formulation) the
effective objective may admit multiple stationary points, so computed configurations can depend on
initialization and continuation schedules. In addition, the penalized formulation becomes
increasingly stiff as $\varepsilon$ decreases, suggesting that relaxation parameters and solver
tolerances must scale with $\varepsilon$ to maintain stability.

\bibliographystyle{acm}
\bibliography{refs}
\end{document}